\documentclass{amsart}

\usepackage[all]{xypic}
\usepackage{tikz}
\usetikzlibrary{arrows}
\usetikzlibrary{decorations.markings}

\usepackage{graphicx}
\usepackage{bm}
\usepackage{epsf}
\usepackage{verbatim} 
\usepackage{amsmath}
\usepackage{amsfonts}
\usepackage{amssymb}
\usepackage{mathrsfs}
\usepackage{amsthm}
\usepackage{newlfont}
%\usepackage{ntheorem}

\begin{comment}
\addtolength{\oddsidemargin}{-.8in}
\addtolength{\evensidemargin}{-.8in}
\addtolength{\textwidth}{1.6in}
\addtolength{\topmargin}{-.6in}
\addtolength{\textheight}{.5in}
\end{comment}
\vfuzz2pt % Don't report over-full v-boxes if over-edge is small

% THEOREM Environments ---------------------------------------------------
 \newtheorem{thm}{Theorem}

 \newtheorem{lem}[thm]{Lemma}
 \newtheorem{prop}[thm]{Proposition}
 \theoremstyle{definition}
 \newtheorem{defn}[thm]{Definition}
 \theoremstyle{remark}
 \newtheorem{rem}[thm]{Remark}
 \theoremstyle{definition}

\theoremstyle{definition}

 \theoremstyle{definition}
 
 \numberwithin{equation}{section}
 \numberwithin{thm}{section}
% MATH -------------------------------------------------------------------

%Sha

% Abbreviations

 \newcommand{\odd}{\mathrm{odd}}
 \newcommand{\disc}{\mathrm{disc}}

 \newcommand{\Hom}{\mathrm{Hom}}
 \newcommand{\Ext}{\mathrm{Ext}}
 \newcommand{\Spec}{\mathrm{Spec}}

 \newcommand{\Aut}{\mathrm{Aut}}
 \newcommand{\End}{\mathrm{End}}

 \newcommand{\Gal}{\mathrm{Gal}}
 \newcommand{\GL}{\mathrm{GL}}

 \newcommand{\coker}{\mathrm{coker}}

 \newcommand{\tor}{\mathrm{tor}}

 \newcommand{\new}{\mathrm{new}}

 \renewcommand{\mod}{\mathrm{mod}}
%Script

% Frankfurt

 \newcommand{\fm}{\mathfrak m}

% Calligraphic
 \newcommand{\cO}{\mathcal{O}}

 \newcommand{\cC}{\mathcal{C}}
 \newcommand{\cA}{\mathcal{A}}
 \newcommand{\cB}{\mathcal{B}}

 \newcommand{\cE}{\mathcal{E}}

%BB
 %\newcommand{\gm}{\mathbb{G}_m}
 \newcommand{\gm}[1]{\mathbb{G}_{m,#1}}

 \newcommand{\F}{\mathbb{F}}
 \newcommand{\Q}{\mathbb{Q}}
 \newcommand{\Z}{\mathbb{Z}}

 \newcommand{\T}{\mathbb{T}}
%BF

%Symbols

 \newcommand{\To}{\longrightarrow}

 \newcommand{\tT}{\widetilde{\mathbb{T}}}

 \newcommand{\G}{\Gamma}

 %\newcommand{\norm}[1]{\left\Vert#1\right\Vert}

%%% ----------------------------------------------------------------------

\begin{document}

\title[On Ribet's isogeny for $J_0(65)$]{On Ribet's isogeny for $J_0(65)$}

\author{Krzysztof Klosin}
\address{Department of Mathematics, 
Queens College, 
City University of New York, 
65-30 Kissena Blvd
Flushing, NY 11367, USA}
\email{kklosin@qc.cuny.edu} 

\author{Mihran Papikian}
\address{Department of Mathematics, Pennsylvania State University, University Park, PA 16802, USA}
\email{papikian@psu.edu}

\thanks{The first author was supported by the Young Investigator Grant \#H98230-16-1-0129 from the National Security Agency, and by a PSC-CUNY award jointly funded by the Professional Staff Congress and the City University of New York.}

\thanks{The second author was partially supported by grants from the Simons Foundation (245676) and the National Security Agency 
(H98230-15-1-0008).} 

\subjclass[2010]{11G18}

\keywords{Modular curves, Ribet's isogeny, Eisenstein ideal, cuspidal divisor group}

\begin{abstract} Let $J^{65}$ be the Jacobian of the Shimura curve attached to the indefinite quaternion algebra 
over $\Q$ of discriminant $65$. We study the isogenies $J_0(65)\to J^{65}$ 
defined over $\Q$, whose existence was proved by Ribet. We prove that there is an isogeny 
whose kernel is supported on the Eisenstein maximal ideals of the Hecke algebra acting on $J_0(65)$, 
and moreover the odd part of the kernel is generated by a cuspidal divisor of order $7$, as is predicted by 
a conjecture of Ogg. 
\end{abstract}

\maketitle

%--------------------------------------------

\section{Introduction} 

Let $N$ be a product of an even number of distinct primes. Let $J_0(N)$ 
be the Jacobian of the modular curve $X_0(N)$. In \cite{RibetIsogeny}, 
Ribet proved the existence of an isogeny defined over $\Q$ between the ``new'' 
part $J_0(N)^\new$ of $J_0(N)$ and the Jacobian $J^N$ 
of the Shimura curve $X^N$ attached to a maximal order in 
the indefinite quaternion algebra over $\Q$ of discriminant $N$. 
Although there are no morphisms $X_0(N)\to X^N$ defined over $\Q$, 
Ribet showed 
that the $\Q_\ell$-adic Tate modules of $J_0(N)^\new$ and $J^N$ are 
isomorphic as $\Gal(\overline{\Q}/\Q)$-modules, where $\ell$ is an arbitrary prime number; this is a 
consequence of 
a correspondence between automorphic forms on $\GL(2)$ and automorphic forms on the multiplicative group of a 
quaternion algebra. The existence of the isogeny $J_0(N)^\new\to J^N$ defined 
over $\Q$ then follows from a special case of Tate's isogeny conjecture for abelian varieties over number fields,  
also proved in \cite{RibetIsogeny} (the general case of Tate's conjecture was proved a few years later by Faltings). 
Unfortunately, Ribet's argument provides no information about the isogenies $J_0(N)^\new\to J^N$ 
beyond their existence. 

In \cite{Ogg}, Ogg made an explicit conjecture about the kernel of Ribet's 
isogeny when $N=pq$ is a product of two distinct primes and $p=2,3,5,7,13$: 
the conjecture predicts that there is an isogeny $J_0(N)^\new\to J^N$ of minimal degree 
whose kernel is a specific group arising from the cuspidal divisor subgroup of $J_0(N)$. 
Note that $p=2,3,5,7,13$ are exactly the primes for which $J_0(pq)$ has purely toric reduction 
at $q$. This fact is crucial for the calculations used by Ogg to come up with his conjecture;  
the underlying idea is that the knowledge of the group of connected components of the N\'eron 
models of $J_0(N)^\new$ and $J^N$ at $q$ yields restrictions on the isogenies between them. 
Ogg's conjecture remains open except for the special cases when $J^N$ has dimension $\leq 3$. 

When $\dim(J^N)=1$, equiv. $N=2\cdot 7$, $3\cdot 5$, $3\cdot 7$, $3\cdot 11$, $2\cdot 17$, $J^N$ 
is an elliptic curve over $\Q$ which is uniquely determined by its component groups at $p$ and $q$, 
and $J_0(N)^\new$ is the optimal elliptic curve of conductor $N$. Then one easily checks 
Ogg's conjecture using Cremona's tables \cite{Cremona}. 
In general, the orders of component groups of $J^N$ can be computed using 
Brandt matrices \cite{JordanLivne}, which is relatively easy to do with the help of a computer program such as \texttt{Magma}. 

When $\dim(J^N)=2$, equiv. $N=2\cdot 13$, $2\cdot 19$, $2\cdot 29$, Ogg's conjecture 
is verified in \cite{GoRo}. In this case, the proof is based on the fact that $X^N$ is bielliptic 
and the lattices of $J_0(N)^\new$ and $J^N$ can be computed through their elliptic quotients. 

When $\dim(J^N)=3$, 
equiv. $N=2\cdot 31$, $2\cdot 41$, $2\cdot 47$, $3\cdot 13$, $3\cdot 17$, $3\cdot 19$, $3\cdot 23$, $5\cdot7$, $5\cdot 11$, 
Ogg's conjecture is verified in \cite{GM}. In this case, $X^N$ is always hyperelliptic. 
By utilizing this fact, Gonz\'alez and Molina explicitly compute the equation for each $X^N$. 
Then they obtain a basis of regular differentials for $X^N$ from these equations to produce a 
period matrix for $J^N$. The period matrix of $J_0(N)^\new$ can be computed using cusp forms with 
rational $q$-expansions. The problem then reduces to comparing the period matrices 
of appropriate quotients of $J_0(N)^\new$ with the period matrix of $J^N$. 

The goal of this paper is to study Ribet's isogeny for $N=5\cdot 13=65$. In this case, $\dim(J^N)=5$ 
and $X^N$ is \textit{not} hyperelliptic; cf. \cite{Michon}. Our approach to the study of Ribet isogenies 
is completely different from that in \cite{GoRo} and \cite{GM}, and crucially relies on the Hecke 
equivariance of such isogenies. In this approach we need to know very little about $X^N$ or $J^N$; 
we only need to know the orders of component groups of $J^N$, which, as we mentioned, are easy to compute, 
and in fact were already computed in \cite{Ogg}. The difficulty shifts to the study of the structure of the Hecke 
algebra and its action on $J_0(N)$.

Let $\T(N):=\Z[T_2, T_3, \dots]$ 
be the $\Z$-algebra generated by the Hecke operators $T_n$ 
acting on be the space $S_2(N)$ of weight $2$ cups forms on $\G_0(N)$. This 
algebra is isomorphic to the subalgebra of $\End(J_0(N))$ generated 
by $T_n$ acting as correspondences on $X_0(N)$. When $N=65$, we have $J_0(N)^\new=J_0(N)$, 
so there is a Ribet isogeny 
$$
\pi: J_0(N)\to J^N.
$$ 
$\T(N)$ also naturally acts on $J^N$ and 
$\pi$ is $\T(N)$-equivariant. This equivariance is implicit in Ribet's proof \cite{RibetIsogeny}; see 
also \cite[Cor. 2.4]{Helm}. 

From now on we assume $N=65$. To simplify the notation, we denote $\T:=\T(N)$, $J:=J_0(N)$, 
$J':=J^N$, $G_\Q:=\Gal(\overline{\Q}/\Q)$. Given a finite abelian group $H$, we denote by $H_p$ 
its $p$-primary component ($p$ is a prime number), and by $H_\odd$ its maximal subgroup of odd order,  
so that $H\cong H_2\times H_\odd$. 
Since the endomorphisms of $J$ induced by Hecke operators 
are defined over $\Q$, the actions of $\T$ and $G_\Q$ on $J$ commute with each other. 
Thus, $\ker(\pi)$ is a $\T[G_\Q]$-submodule of $J$. We show that if 
the kernel of an isogeny from $J$ to another abelian variety is a $\T[G_\Q]$-module, then, 
up to endomorphisms of $J$, the kernel is supported on the Eisenstein maximal 
ideals of $\T$. We then classify all $\T[G_\Q]$-submodules of $J$ of odd order supported 
on the Eisenstein maximal ideals. This leads to the following theorem, which is the main result of the paper:
\begin{thm}\label{thmIntro}
There is a Ribet isogeny $\pi:J\to J'$ such that $\ker(\pi)_\odd\cong \Z/7\Z$ is the $7$-primary 
component of the cuspidal divisor group of $J$. 
\end{thm}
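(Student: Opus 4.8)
The plan is to follow the three-part strategy described just above the statement: (A) replace $\pi$ by a Ribet isogeny whose kernel is supported on the Eisenstein maximal ideals of $\T$; (B) analyse $\T$ and classify the $\T[G_\Q]$-submodules of $J$ of odd order supported on those ideals; (C) compare the component groups of $J$ and $J'$ at $13$ to single out the cuspidal $\Z/7\Z$ among the candidates produced by (B).

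\textbf{Step A.} By Ribet's theorem $V_\ell(J)\cong V_\ell(J')$ as $\T[G_\Q]$-modules for all $\ell$, so by multiplicity one the group $\Hom_\Q(J,J')$ of $\T$-equivariant $\Q$-isogenies, once $\pi$ is fixed, is a $\T$-lattice $L\subseteq\T\otimes\Q$ containing $\T\cdot\pi$ (with $\pi$ corresponding to $1$). At a non-Eisenstein maximal ideal $\fm$ the residual representation $\bar\rho_\fm$ is absolutely irreducible, $\T_\fm$ is Gorenstein, and $T_\ell(J)_\fm$ is free of rank two over $\T_\fm$; hence $L_\fm\cong\T_\fm$ and contains units. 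Since $T_\ell(\pi)$ is an isomorphism for all but finitely many $\ell$, a Chinese remainder argument inside $L$ yields $x\in L$ that is a unit at every non-Eisenstein maximal ideal, and the corresponding $\pi'\in\Hom_\Q(J,J')$ is a Ribet isogeny with $T_\ell(\pi')_\fm$ an isomorphism at each non-Eisenstein $\fm$; thus $\ker(\pi')$ is supported on the Eisenstein maximal ideals of $\T$. We rename and assume $\ker(\pi)$ has this property.

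\textbf{Step B.} From the newform decomposition of $J$ (of dimension $5$) and a modest supply of Hecke eigenvalues $T_\ell$ ($\ell\nmid 65$) together with $U_5,U_{13}$, one lists the maximal ideals of $\T$ and determines which are Eisenstein, i.e. $T_\ell\equiv 1+\ell$; equivalently, which newforms of level $65$ are congruent modulo a prime to a weight-$2$ Eisenstein series of level $65$ (the space of such Eisenstein series has dimension $4-1=3$). The expected outcome is that the only odd Eisenstein maximal ideal relevant to $J$ lies over $7$ — it accounts for the factor $7$ in the numerator of $(13^2-1)/24$, equivalently in $|C(65)|$, where $C(65)$ denotes the cuspidal divisor group — and that the completion $\T_\fm$ there is Gorenstein with locally principal Eisenstein ideal $I_\fm$, so that $\T_\fm/I_\fm\cong\Z/7\Z$. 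Since the Shimura subgroup of $J$ has order prime to $7$, the $\mu$-type constituent of $J[\fm]$ is a quotient, not a sub, so $J[\fm]$ is a nonsplit extension of a $\mu$-type group by the étale group $C(65)_7\cong\Z/7\Z$, on which $G_\Q$ acts trivially (it is generated by differences of rational cusps). Tracking this through the structure of $J[\fm^\infty]$ shows that the only $\T[G_\Q]$-submodules of $J$ of odd order supported on the Eisenstein ideals are $0$, the étale group $C(65)_7$, and the "balanced" groups $J[\fm^k]$ ($k\geq1$).

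\textbf{Step C.} At $13$ the curve $X_0(65)$ has totally degenerate reduction (two copies of $X_0(5)\cong\P^1$ meeting at the supersingular points), and $X^{65}$ has totally split reduction there by the \v{C}erednik--Drinfeld uniformization, so $J$ and $J'$ both have purely toric reduction at $13$, with component groups $\Phi$ and $\Phi'$. Their orders are computed from the dual graph of $X_0(65)_{\F_{13}}$ (Deligne--Rapoport/Raynaud) and from Brandt matrices (done by Ogg), and their $7$-valuations differ — this is essentially Ogg's original observation. Now a Ribet isogeny $\pi$ induces $\Phi\to\Phi'$, and quotienting $J$ by a kernel that is balanced between its multiplicative and étale parts at $13$ (in particular $0$ or any $J[\fm^k]$) leaves $v_7(|\Phi|)$ unchanged, whereas quotienting by the purely étale $C(65)_7$ changes it by one. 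Hence $\ker(\pi)_\odd$ must equal $C(65)_7\cong\Z/7\Z$; and no odd prime other than $7$ divides $|\ker(\pi)|$, since by Step B there is no other odd Eisenstein prime. This proves the theorem.

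\textbf{Main obstacle.} The crux is Step B: pinning down the local structure of $\T$ at the Eisenstein primes with enough precision — Gorenstein-ness, local principality of the Eisenstein ideal, and the resulting filtration on $J[\fm^\infty]$ — to make the classification of $\T[G_\Q]$-submodules of odd order finite and explicit, together with the (essentially Mazur-type) multiplicity-one input that controls $\dim J[\fm]$. A secondary difficulty is the bookkeeping in Step A: checking that dividing out the non-Eisenstein part of $\ker(\pi)$ by an element of $L$ produces an honest isogeny rather than merely a $\Q$-isogeny, which needs control of $\End_\Q(J)$ and of the index $[L:\T]$.
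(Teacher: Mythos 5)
Your Step B contains a genuine gap that then invalidates Step C. You assert that ``the only odd Eisenstein maximal ideal relevant to $J$ lies over $7$,'' citing $(13^2-1)/24 = 7$. This is wrong: the paper computes $\T/\cE \cong \Z/84\Z$, so there are Eisenstein maximal ideals $\fm_2$, $\fm_3$, $\fm_7$; indeed $\cC\cong \Z/2\Z\times\Z/4\Z\times\Z/3\Z\times\Z/7\Z$ visibly has $3$-torsion, and the Shimura subgroup contributes $\mu_3$. Consequently the odd-order $\T[G_\Q]$-submodules of $J$ supported on Eisenstein ideals, after the reduction step, are $\{0,\ \cC_3,\ \Sigma_3,\ \cC_7,\ \cC_3\times\cC_7,\ \Sigma_3\times\cC_7\}$, not your shorter list $\{0,\ \cC_7,\ J[\fm^k]\}$. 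Moreover, obtaining this list is not a bookkeeping exercise in $J[\fm^\infty]$: the paper's Proposition~\ref{propNoH} must rule out submodules $H$ with $J[\fm]\not\subset H$ and $H\not\subset J[\fm]$, and this requires identifying the splitting field $\Q(H)$ via class field theory and then deriving a contradiction from explicit point counts $\#J(\F_{19})$ and $\#J(\F_{937})$. Your phrase ``tracking this through the structure of $J[\fm^\infty]$'' does not supply any of this.

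With the corrected candidate list, your Step C as written also fails: you compare only $7$-adic valuations of component groups at $13$, but $\cC_7$, $\cC_3\times\cC_7$ and $\Sigma_3\times\cC_7$ all produce $\Phi(13)''$ with the same $7$-adic valuation. One must compare the full odd parts $(\Phi(p)'')_\odd$ against $(\Phi(p)')_\odd$, which the paper does via Lemma~\ref{lemJNT2011} together with the explicit computation of $\wp_5,\wp_{13}$ on $\cC$. Finally, your Step A, while plausible in spirit, is vaguer than the paper's: Proposition~\ref{propPrincipal} shows every maximal ideal of $\T$ of odd residue characteristic is principal, so one can directly factor $\pi=\pi'\circ\eta$ (with $\fm=\eta\T$) whenever $J[\fm]\subset\ker(\pi)_\odd$, and then invoke irreducibility of $J[\fm]$ (Mazur, Ribet) for non-Eisenstein $\fm$ to conclude that the resulting $\ker(\pi)_\odd$ is supported only on the Eisenstein ideals; this sidesteps the $\T$-lattice bookkeeping you yourself flag as a difficulty.
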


Ogg's conjecture in this case predicts that in fact $\ker(\pi)=\Z/7\Z$. There is a unique Eisenstein maximal ideal 
$\fm_2\lhd \T$ of residue characteristic $2$. In principle, it should be 
possible to extend our analysis to finite $\T[G_\Q]$-submodules of $J$ supported on $\fm_2$ 
to show that $\ker(\pi)_2=0$. But there are several technical difficulties which at present we 
are not able to overcome: these stem from the fact that $\fm_2$ is a prime of fusion, $\T_{\fm_2}$ 
is not Gorenstein, and the groups of rational points of reductions of $J$ usually have large $2$-primary 
components. 

Our strategy can be applied also to cases when $\dim(J^N)=3$, which leads to results similar to 
Theorem \ref{thmIntro}, at least when $J_0(N)^\new=J_0(N)$ (equiv. $N=3\cdot 13, 5\cdot 7$); see 
Remarks \ref{rem4.9} and  \ref{rem4.10}. 

\begin{rem} Given a prime $\ell$, if $H:=(J_0(N)^\new(\Q)_\tor)_\ell\neq 0$ but $(J^N(\Q)_\tor)_\ell=0$, 
then obviously $H\subset \ker(\pi)$ for any Ribet isogeny $\pi: J_0(N)^\new\to J^N$.  
For an odd prime $\ell$, in \cite{YooBLMS}, Yoo gives sufficient conditions for the non-existence of rational 
points of order $\ell$ on $J^N$, when $N=pq$ is a product of two distinct primes. This then can be used 
to find non-trivial subgroups of the kernels of Ribet isogenies; see \cite[Thm. 1.3]{YooBLMS}. 
In the case when $N=65$, Yoo's theorem 
implies that $\Z/7\Z\subset \ker(\pi)$. 
\end{rem}

%--------------------------------------

\section{N\'eron models} 

In this section we recall some terminology and facts from the theory of N\'eron models. 
Let $R$ be a complete discrete valuation ring,
with fraction field $K$ and residue field $k$.
Let $A$ be an abelian variety over $K$. Denote by $\cA$ its N\'eron
model over $R$ and denote by $\cA_k^0$ the connected component of the
identity of the special fiber $\cA_k$ of $A$. There is an exact
sequence
$$
0\to \cA_k^0\to \cA_k\to \Phi_A\to 0,
$$
where $\Phi_A$ is a finite (abelian) group called the
\textit{component group of $A$}. We say that $A$ has
\textit{semi-abelian reduction} if $\cA_k^0$ is an extension of an
abelian variety $A_k'$ by an affine algebraic torus $T_A$ over $k$
(cf. \cite[p. 181]{NM}):
$$
0\to T_A\to \cA_k^0\to A_k'\to 0.
$$
We say that $A$ has \textit{good reduction}, if $\cA_k^0=A_k'$ (in this case, we also have $\cA_k=\cA_k^0$); 
we say that $A$ has (purely) \textit{toric reduction} if $\cA_k^0=T_A$. The
\textit{character group}
\begin{equation}\label{eqM}
M_A:=\Hom((T_{A})_{\bar{k}}, \gm{\bar{k}})
\end{equation}
is a free abelian group contravariantly associated to $A$.

Let $K'$ be a finite unramified extension of $K$, with ring of integers $R'$ and 
residue field $k'$. By the fundamental property of N\'eron models, we have an isomorphism of groups 
$A(K')\cong\cA(R')$, which defines a canonical reduction map 
\begin{equation}\label{eqCanRed}
A(K')\to \cA_k(k'). 
\end{equation}
Composing \eqref{eqCanRed} with  $\cA_k\to \Phi_A$, we get a homomorphism 
\begin{equation}\label{eqCanRedPhi}
A(K')\to \Phi_A. 
\end{equation}

\begin{prop}\label{propKatz} Let $K'$ be a finite unramified extension of $K$. 
Let $H\subset A(K')$ be a finite subgroup. 
Assume that either $\# H$ is coprime to the characteristic $p$ of $k$, or that $K$ has characteristic $0$ and its absolute ramification 
index is $< p-1$. Then \eqref{eqCanRed} defines an injection $H\hookrightarrow \cA_k(k')$. 
\end{prop}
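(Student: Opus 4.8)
The plan is to identify the kernel of the reduction map \eqref{eqCanRed} with the group of $\fm_{R'}$-valued points of a formal group, and then to show that under either hypothesis this group contains no nonzero element whose order divides $\#H$. To set up, I first pass to the base ring $R'$: since $K'/K$ is unramified, $R'$ is a complete discrete valuation ring with residue field $k'$ and with the same absolute ramification index as $R$, the structure morphism $\Spec R'\to\Spec R$ is finite \'etale, and hence the base change $\cA_{R'}:=\cA\times_R R'$ is the N\'eron model of $A_{K'}$; in particular $\cA_{R'}$ is smooth over $R'$ with special fiber $\cA_k\times_k k'$. Let $\fm_{R'}$ be the maximal ideal of $R'$ and let $\widehat{\cA}$ be the completion of $\cA_{R'}$ along the identity section of its special fiber, a formal group of dimension $g=\dim A$ over $R'$. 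By the N\'eron property $A(K')=\cA_{R'}(R')$, and smoothness of $\cA_{R'}$ along its identity section identifies the kernel of \eqref{eqCanRed} with $\widehat{\cA}(\fm_{R'})$. Since the intersection of $H$ with this kernel consists of elements of order dividing $\#H$, it suffices to prove that $\widehat{\cA}(\fm_{R'})$ has no nonzero element of order dividing $\#H$.

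For this I would use the $\fm_{R'}$-adic filtration $\widehat{\cA}(\fm_{R'})\supseteq\widehat{\cA}(\fm_{R'}^2)\supseteq\cdots$. It is separated because $R'$ is $\fm_{R'}$-adically separated, and its graded pieces are $\widehat{\cA}(\fm_{R'}^n)/\widehat{\cA}(\fm_{R'}^{n+1})\cong(\fm_{R'}^n/\fm_{R'}^{n+1})^{g}$, since a formal group law equals $x+y$ to first order. Each graded piece is therefore a finite-dimensional $k'$-vector space. If $\mathrm{char}(k)=p>0$ it is a $p$-group, so $\widehat{\cA}(\fm_{R'})$ is a pro-$p$ group and has no nonzero prime-to-$p$ torsion; this settles the first case, in which $\#H$ is coprime to $p$. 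If $\mathrm{char}(k)=0$ each graded piece is a torsion-free $\Q$-vector space, so $\widehat{\cA}(\fm_{R'})$ is torsion-free and again there is nothing to prove.

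The remaining case is $\mathrm{char}(K)=0$ and $\mathrm{char}(k)=p>0$ with absolute ramification index $e<p-1$. Here I would invoke convergence of the formal logarithm: for $e<p-1$ the logarithm of $\widehat{\cA}$ converges on $\fm_{R'}$ and induces a group isomorphism $\widehat{\cA}(\fm_{R'})\xrightarrow{\ \sim\ }\mathrm{Lie}(\widehat{\cA})\otimes_{R'}\fm_{R'}$, with $\mathrm{Lie}(\widehat{\cA})$ free of rank $g$ over $R'$; this is the convergence estimate underlying Katz's lemma (cf. the appendix to Katz's paper on Galois properties of torsion points of abelian varieties, or standard references on formal groups). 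As $R'$ is an integral domain of characteristic $0$, the target is torsion-free, hence so is $\widehat{\cA}(\fm_{R'})$, and any element of $H$ annihilated by \eqref{eqCanRed} must vanish. In every case \eqref{eqCanRed} restricts to an injection on $H$.

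I do not anticipate a real obstacle: once compatibility of N\'eron models with unramified base change and the formal structure of $\widehat{\cA}$ are in place, the argument is purely formal. The one delicate input is the logarithm convergence bound $e<p-1$, which is exactly where the ramification hypothesis is used and which is sharp — for $e\ge p-1$ the group $\widehat{\cA}(\fm_{R'})$ may acquire $p$-torsion, so in the first case the coprimality restriction on $\#H$ cannot be dropped.
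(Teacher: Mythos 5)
Your proof is correct and is precisely the argument behind the paper's bare citation to Katz (p.~502) and to Bosch--L\"utkebohmert--Raynaud, Prop.~7.3/3: the filtration of the formal-group kernel by $\widehat{\cA}(\fm_{R'}^n)$, with $k'$-linear graded pieces, settles the prime-to-$p$ case, and the $p$-adic analytic structure of $\widehat{\cA}$ under the hypothesis $e<p-1$ settles the mixed-characteristic case. One cosmetic correction: the formal logarithm of $\widehat{\cA}$ converges on $\fm_{R'}$ for \emph{any} $e$; the bound $e<p-1$ is needed so that the formal \emph{exponential} also converges on $\fm_{R'}$, which is what yields the group isomorphism $\widehat{\cA}(\fm_{R'})\xrightarrow{\ \sim\ }\mathrm{Lie}(\widehat{\cA})\otimes_{R'}\fm_{R'}$ that you invoke (in general $\log$ is merely a homomorphism whose kernel is the torsion subgroup).
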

\begin{proof}
See \cite[p. 502]{Katz} and \cite[Prop. 7.3/3]{NM}. 
\end{proof}

Let $\varphi: A\to B$ be an isogeny defined over $K$. By the N\'eron mapping property, $\varphi$ extends to a
morphism $\varphi: \cA\to \cB$ of the N\'eron models. On the special
fibers we get a homomorphism $\varphi_k: \cA_k\to \cB_k$, which induces
an isogeny $\varphi_k^0:\cA_k^0\to \cB_k^0$; \cite[Cor. 7.3/7]{NM}. This implies 
that $B$ has semi-abelian (resp. toric) reduction if $A$ has semi-abelian (resp. toric) reduction. 
The isogeny $\varphi_k^0$
restricts to an isogeny $\varphi_t:T_A\to T_B$, which corresponds to
an injective homomorphisms of character groups $\varphi^\ast: M_B\to
M_A$ with finite cokernel. We also get a natural homomorphism
$\varphi_\Phi: \Phi_A\to \Phi_B$. 

Denote by $\hat{A}$ the dual abelian variety of $A$. 
Let $\hat{\varphi}:\hat{B}\to \hat{A}$ be the isogeny dual to $\varphi$. 
Assume $A$ has semi-abelian reduction. 
In \cite{SGA7}, Grothendieck defined a non-degenerate pairing
$u_A:M_A\times M_{\hat{A}}\to \Z$ (called \textit{monodromy
pairing}) with nice functorial properties, which induces an exact
sequence
\begin{equation}\label{eqGrothM}
0\to M_{\hat{A}}\xrightarrow{u_A} \Hom(M_A, \Z)\to \Phi_A\to 0.
\end{equation}
Using \eqref{eqGrothM}, one obtains a commutative diagram with exact rows
(cf. \cite[p. 8]{RibetSTN}):
$$
\xymatrix{ 0 \ar[r] & M_{\hat{A}} \ar[r]
\ar[d]^-{\hat{\varphi}^\ast} & \Hom(M_A, \Z)
\ar[d]^-{\Hom(\varphi^\ast, \Z)} \ar[r]
& \Phi_A \ar[d]^-{\varphi_\Phi} \ar[r] & 0 \\
0 \ar[r] & M_{\hat{B}} \ar[r] & \Hom(M_B, \Z) \ar[r] & \Phi_B \ar[r]
& 0.}
$$
From this diagram we get the exact sequence
\begin{equation}\label{eq2'}
0\to \ker(\varphi_\Phi)\to
M_{\hat{B}}/\hat{\varphi}^\ast(M_{\hat{A}})\to
\Ext^1_\Z(M_A/\varphi^\ast(M_B), \Z)\to \coker(\varphi_\Phi)\to 0.
\end{equation}
Since 
$$
\Ext^1_\Z(M_A/\varphi^\ast(M_B), \Z)\cong
\Hom(M_A/\varphi^\ast(M_B), \Q/\Z)=: (M_A/\varphi^\ast(M_B))^\vee,
$$
we can rewrite \eqref{eq2'} as 
\begin{equation}\label{eq2}
0\to \ker(\varphi_\Phi)\to
M_{\hat{B}}/\hat{\varphi}^\ast(M_{\hat{A}})\to
(M_A/\varphi^\ast(M_B))^\vee\to \coker(\varphi_\Phi)\to 0.
\end{equation}

Note that $M_A/\varphi^\ast(M_B)\cong \Hom(\ker(\varphi_t), \gm{k})$. On the other hand, 
$\ker(\varphi_t)$ can be canonically identified with a subgroup scheme of $H:=\ker(\varphi)$; cf. \cite[p. 762]{CS}.  
Therefore, $\# M_A/\varphi^\ast(M_B)$ divides $\#H$. Similarly, $\# M_{\hat{B}}/\hat{\varphi}^\ast(M_{\hat{A}})$ 
divides $\#\ker(\hat{\varphi})$. Since $\ker(\hat{\varphi})\cong \Hom(\ker(\phi), \gm{K})$ (see \cite[Thm.1, p. 143]{MumfordAV}), 
we conclude that $\# M_{\hat{B}}/\hat{\varphi}^\ast(M_{\hat{A}})$ also divides $\#H$. Now one 
easily deduces from \eqref{eq2} the following:

\begin{lem}\label{lemPhil} Assume $A$ has semi-abelian reduction, and $\varphi: A\to B$ is an isogeny defined over $K$. 
If $\ell$ is a prime number which does not divide $\#\ker(\varphi)$, then $\varphi_\Phi$ induces an 
isomorphism $(\Phi_A)_\ell\cong (\Phi_B)_\ell$. 
\end{lem}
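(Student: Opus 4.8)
The plan is to extract the statement from the four-term exact sequence \eqref{eq2} by a direct divisibility argument. The key observation, already assembled in the paragraph preceding the lemma, is that both outer nontrivial terms of \eqref{eq2}, namely $M_{\hat B}/\hat\varphi^\ast(M_{\hat A})$ and $(M_A/\varphi^\ast(M_B))^\vee$, have order dividing $\#\ker(\varphi)$. Indeed $M_A/\varphi^\ast(M_B)\cong\Hom(\ker(\varphi_t),\gm{k})$ with $\ker(\varphi_t)$ a subgroup scheme of $H=\ker(\varphi)$, and dually $M_{\hat B}/\hat\varphi^\ast(M_{\hat A})$ has order dividing $\#\ker(\hat\varphi)=\#\Hom(\ker(\varphi),\gm{K})=\#H$. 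So once $\ell\nmid\#H$, both of these groups have trivial $\ell$-primary part.

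Next I would localize \eqref{eq2} at $\ell$, i.e.\ tensor with $\Z_\ell$ (equivalently, pass to $\ell$-primary components; exactness is preserved since localization is exact). The terms $M_{\hat B}/\hat\varphi^\ast(M_{\hat A})$ and $(M_A/\varphi^\ast(M_B))^\vee$ both vanish after this localization by the previous paragraph. The exact sequence
$$
0\to \ker(\varphi_\Phi)\to M_{\hat{B}}/\hat{\varphi}^\ast(M_{\hat{A}})\to (M_A/\varphi^\ast(M_B))^\vee\to \coker(\varphi_\Phi)\to 0
$$
then forces $(\ker(\varphi_\Phi))_\ell=0$ and $(\coker(\varphi_\Phi))_\ell=0$. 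Since $\Phi_A$ and $\Phi_B$ are finite abelian groups and $\varphi_\Phi\colon\Phi_A\to\Phi_B$ is the map fitting into the commutative diagram above, the vanishing of the $\ell$-parts of its kernel and cokernel is exactly the assertion that the induced map $(\Phi_A)_\ell\to(\Phi_B)_\ell$ is an isomorphism.

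There is essentially no serious obstacle here: the work has all been done in setting up \eqref{eq2} and the divisibility bounds on its outer terms; the lemma is just the bookkeeping conclusion. The only point requiring a word of care is that passing to $\ell$-primary components is an exact operation on the category of finite abelian groups (or, equivalently, that $-\otimes_\Z\Z_\ell$ is exact), so that killing the two middle-outer terms genuinely kills both $\ker(\varphi_\Phi)_\ell$ and $\coker(\varphi_\Phi)_\ell$; one should also note that the hypothesis that $A$ has semi-abelian reduction is what makes Grothendieck's sequence \eqref{eqGrothM}, and hence \eqref{eq2}, available. I would write the proof in two or three sentences, citing the displayed sequence \eqref{eq2} and the divisibility remarks immediately preceding the lemma statement.
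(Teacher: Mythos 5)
Your argument is exactly what the paper intends by ``one easily deduces from \eqref{eq2}'': the two inner terms of \eqref{eq2} have order dividing $\#\ker(\varphi)$ by the discussion immediately preceding the lemma, so localizing at a prime $\ell\nmid\#\ker(\varphi)$ kills both and forces $(\ker\varphi_\Phi)_\ell=(\coker\varphi_\Phi)_\ell=0$. The proof is correct and matches the paper's approach.
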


\begin{lem}\label{lemJNT2011} 
Let $K'$ be a finite unramified extension of $K$. 
Let $\varphi: A\to B$ be an isogeny defined over $K$ such that $H=\ker(\varphi)\subset A(K')$, i.e., 
$H$ becomes a constant group-scheme over $K'$.   
Let $H_0$ (resp. $H_1$) be the kernel (resp. image) of the homomorphism $H\to \Phi_A$ 
defined by \eqref{eqCanRedPhi}. Assume $A$ has toric reduction. Assume that 
either $\# H$ is coprime to the characteristic $p$ of $k$, or that $K$ has characteristic $0$ and its absolute ramification 
index is $< p-1$. Then there is an exact sequence 
$$
0\to H_1\to \Phi_A\xrightarrow{\varphi_\Phi} \Phi_B\to H_0\to 0.
$$
\end{lem}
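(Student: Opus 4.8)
The plan is to identify $\ker(\varphi_\Phi)$ and $\coker(\varphi_\Phi)$ by combining the snake lemma for the component-group extensions with Grothendieck's exact sequence \eqref{eq2} and with the degree formula for isogenies between abelian varieties of purely toric reduction. First I would pass to the strict henselization of $R$ (equivalently, enlarge $K$ inside $K'$ by an unramified extension), which affects none of the groups in the statement, so that $k=\bk$ is algebraically closed and $T_A$ is split. Since $A$ has toric reduction, $\cA_k^0=T_A$; as recalled before Lemma \ref{lemPhil}, $B$ also has toric reduction, so $\cB_k^0=T_B$ and $\varphi_k^0=\varphi_t$. The extra input I would bring in is the identity
$$
\#H=\deg(\varphi)=\big[M_A:\varphi^\ast(M_B)\big]\cdot\big[M_{\hat{B}}:\hat{\varphi}^\ast(M_{\hat{A}})\big],
$$
which follows from the rigid-analytic (Tate) uniformization $A\cong T_A/\Lambda_A$, available precisely because $A$ has purely toric reduction: writing $\Lambda_A\cong M_{\hat{A}}$ and $\Lambda_B\cong M_{\hat{B}}$ for the period lattices, one has $\ker(\varphi)=\varphi_t^{-1}(\Lambda_B)/\Lambda_A$, an extension of $\Lambda_B/\varphi_t(\Lambda_A)$ by $\ker(\varphi_t)$, and $\varphi_t$ restricts to $\hat{\varphi}^\ast$ on the period lattices.

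For the first three terms, apply the snake lemma to
$$
\xymatrix{
0\ar[r] & T_A\ar[r]\ar[d]_{\varphi_t} & \cA_k\ar[r]\ar[d]_{\varphi_k} & \Phi_A\ar[r]\ar[d]_{\varphi_\Phi} & 0\\
0\ar[r] & T_B\ar[r] & \cB_k\ar[r] & \Phi_B\ar[r] & 0.
}
$$
As $\varphi_t$ is an isogeny of tori it is faithfully flat, so $\coker(\varphi_t)=0$ and the snake sequence collapses to an exact sequence of finite group schemes over $\bk$, $0\to\ker(\varphi_t)\to\ker(\varphi_k)\to\ker(\varphi_\Phi)\to0$, which remains exact on $\bk$-points because $\bk$ is algebraically closed. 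By Proposition \ref{propKatz} the reduction map is injective on $H$; let $\overline{H}\subset\cA_k(\bk)$ denote its image, so $\#\overline{H}=\#H$. Since $\varphi_k$ annihilates $\overline{H}$, we have $\overline{H}\subseteq\ker(\varphi_k)(\bk)$; moreover $\overline{H}\cap T_A(\bk)$ is precisely $H_0$, while the image of $\overline{H}$ in $\Phi_A$ is precisely $H_1$, which consequently lies in $\ker(\varphi_\Phi)$. A count now forces everything: $\#H=\#H_0\cdot\#H_1$, with $\#H_0\le\#\ker(\varphi_t)(\bk)\le\#\ker(\varphi_t)=[M_A:\varphi^\ast(M_B)]$ and, using the injectivity in \eqref{eq2}, $\#H_1\le\#\ker(\varphi_\Phi)\le\#\big(M_{\hat{B}}/\hat{\varphi}^\ast(M_{\hat{A}})\big)=[M_{\hat{B}}:\hat{\varphi}^\ast(M_{\hat{A}})]$; by the degree identity all of these inequalities are equalities. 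In particular $\ker(\varphi_t)$ is étale with $\bk$-points $H_0$, and $H_1=\ker(\varphi_\Phi)$, which is exactly exactness of $0\to H_1\to\Phi_A\xrightarrow{\varphi_\Phi}\Phi_B$.

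For the last term, we now have $\#\ker(\varphi_\Phi)=[M_{\hat{B}}:\hat{\varphi}^\ast(M_{\hat{A}})]=\#\big(M_{\hat{B}}/\hat{\varphi}^\ast(M_{\hat{A}})\big)$, so the injection $\ker(\varphi_\Phi)\hookrightarrow M_{\hat{B}}/\hat{\varphi}^\ast(M_{\hat{A}})$ in \eqref{eq2} is an isomorphism; exactness of \eqref{eq2} then makes the next arrow $M_{\hat{B}}/\hat{\varphi}^\ast(M_{\hat{A}})\to(M_A/\varphi^\ast(M_B))^\vee$ zero and hence $(M_A/\varphi^\ast(M_B))^\vee\xrightarrow{\ \sim\ }\coker(\varphi_\Phi)$. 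Finally $M_A/\varphi^\ast(M_B)\cong\Hom(\ker(\varphi_t),\gm{k})$, and since $\ker(\varphi_t)$ is the étale group scheme with $\bk$-points $H_0$ this gives $M_A/\varphi^\ast(M_B)\cong H_0^\vee$, whence $\coker(\varphi_\Phi)\cong H_0$. The composite $\Phi_B\twoheadrightarrow\coker(\varphi_\Phi)\xrightarrow{\ \sim\ }H_0$ is the desired surjection, completing the four-term exact sequence.

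The main obstacle is the transition from inclusions to equalities in the count above: a priori the snake lemma gives only $H_0\subseteq\ker(\varphi_t)(\bk)$ and $H_1\subseteq\ker(\varphi_\Phi)$, and pinning them down uses both Proposition \ref{propKatz} (this is where the hypotheses on $\#H$ and on the absolute ramification index are spent, forcing $\#\overline{H}=\#H$ and ultimately the étaleness of $\ker(\varphi_t)$) and the degree identity, whose proof relies on $A$ having \emph{purely} toric reduction. Everything else — the snake lemma, the manipulation of \eqref{eq2}, and the identification of $\coker(\varphi_\Phi)$ — is then formal.
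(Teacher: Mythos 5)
Your argument is correct and takes a genuinely different route from the one in the paper. The paper's proof is very short: it asserts $H_0=\ker(\varphi_t)$, cites Conrad--Stein \cite[Thm.~8.6]{CS} as a black box for the identification $M_{\hat{B}}/\hat{\varphi}^\ast(M_{\hat{A}})\cong H_1$, substitutes both into \eqref{eq2}, and concludes by comparing $\ker(\varphi_\Phi)$ with $H_1$. You instead re-derive these two identifications from first principles: the multiplicativity $\#H=[M_A:\varphi^\ast(M_B)]\cdot[M_{\hat B}:\hat\varphi^\ast(M_{\hat A})]$ coming from the non-archimedean (Tate/Mumford) uniformization, which is exactly where pure toricity enters, combined with the snake lemma and the a priori inequalities $\#H_0\le\#\ker(\varphi_t)$, $\#H_1\le\#\ker(\varphi_\Phi)\le \#\bigl(M_{\hat B}/\hat\varphi^\ast(M_{\hat A})\bigr)$; the degree identity forces all of them to be equalities, and the rest of \eqref{eq2} then falls out. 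What you gain is a self-contained argument that makes visible exactly how the hypotheses are used (Proposition \ref{propKatz} gives $\#\overline{H}=\#H$, the count then forces $\ker(\varphi_t)$ to be \'etale), at the cost of a longer proof; the paper gains brevity by outsourcing precisely this counting to \cite{CS}. One small point worth making explicit: at the very end you pass from $\Hom(\ker(\varphi_t),\gm{k})$ to $H_0^\vee$ once you know $\ker(\varphi_t)$ is constant with points $H_0$; this identification needs $p\nmid\#H_0$, which indeed holds because a \emph{constant} finite subgroup scheme of a torus in characteristic $p$ automatically has order prime to $p$ (equivalently, its Cartier dual $M_A/\varphi^\ast(M_B)$ is a constant group, so $\ker(\varphi_t)$ must be diagonalizable and \'etale simultaneously), but this deserves a sentence.
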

\begin{proof}
Under these assumptions, we have $H\hookrightarrow \cA_k(k')$ and $H_0=\ker(\varphi_t)$. This implies 
$(M_A/\varphi^\ast(M_B))^\vee\cong H_0$. Next, \cite[Thm. 8.6]{CS}
implies that $M_{\hat{B}}/\hat{\varphi}^\ast(M_{\hat{A}})\cong H_1$.
Thus, we can rewrite \eqref{eq2} as
$$
0\to \ker(\varphi_\Phi)\to H_1\to H_0\to \coker(\varphi_\Phi)\to 0.
$$
Since $\ker(\varphi_\Phi)= H_1$, we conclude from this exact sequence that
$\coker(\varphi_\Phi)\cong H_0$.
\end{proof}

\section{Hecke Algebra}\label{sHA}

Since the $\Z$-algebra $\T$ is free of finite rank as a $\Z$-module, we 
can define the discriminant $\disc(\T)$ of $\T$ with respect to the trace pairing; cf. \cite[p. 66]{Reiner}. An algorithm 
for computing the discriminants of Hecke algebras is implemented in \texttt{Magma}; it gives $\disc(\T)=2^{11}\cdot 3$. 
We then obtain $$\T=\Z T_1 +\Z T_2+ \Z T_3 + \Z T_5 + \Z T_{11}$$ as a free $\Z$-module by comparing the 
discriminants. 
We have $\T\otimes_\Z\Q\cong \Q\times \Q(\sqrt{2})\times \Q(\sqrt{3})$. Let $$\tT= \Z\times \Z[\sqrt{2}]\times \Z[\sqrt{3}]$$ 
be the integral closure of $\T$ in $\T\otimes\Q$. 
Viewing $\T$ as an order in $\tT$, we have %(by checking the characteristic polynomials of $T_i$ and $T_i\pm T_j$): 
\begin{align}\label{eqExplHecke}
\nonumber T_1 & =(1, 1, 1)\\ 
 \nonumber T_2 &=(-1, -1+\sqrt{2}, \sqrt{3})\\
T_3 &=(-2, \sqrt{2}, 1-\sqrt{3})\\
\nonumber T_5 &=(-1, 1, -1)\\ 
\nonumber T_{11}&=( 2, 2-\sqrt{2}, -3+\sqrt{3}). 
\end{align}
One then observes that $\T=\Z v_1+ \Z v_2 +\Z v_3+\Z v_4+\Z v_5$, where 
$$
v_1 =(1, 1, 1), \quad v_2 =(0, 2, 0), \quad v_3 =(0, 0, 2), \quad v_4=(0, 2\sqrt{2}, 0), 
$$
$$
v_5=(-1, -1+\sqrt{2}, 2-\sqrt{3}),
$$
which implies
\begin{equation}\label{eqTrep}
\T\cong \left\{(a, b_1 + b_2\sqrt{2}, c_1+c_2\sqrt{3})\quad \bigg|\quad \begin{matrix} a,b_1, b_2, c_1, c_2\in \Z, \\ 
a\equiv b_1\equiv (c_1+c_2)\ \mod\ 2,\\ b_2\equiv c_2\ \mod\ 2 \end{matrix}  
\right\}. 
\end{equation}

Given a maximal ideal $\fm\lhd \T$, let $\T_\fm=\underset{\substack{\longleftarrow\\ n}}{\lim}\ \T/\fm^n$ denote the completion 
of $\T$ at $\fm$. 

\begin{prop}\label{propPrincipal}
Every maximal ideal in $\T$ of odd residue characteristic is principal. In particular, 
$\T_\fm$ is Gorenstein for any maximal ideal $\fm\lhd \T$ of odd residue characteristic; cf. \cite[p. 329]{Tilouine}. 
\end{prop}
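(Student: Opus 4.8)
The plan is to exploit the explicit presentation \eqref{eqTrep}, which realizes $\T$ as an order in its normalization $\tT=\Z\times\Z[\sqrt2]\times\Z[\sqrt3]$, a principal ideal domain. First I would determine the conductor $\mathfrak c:=\{x\in\tT\mid x\tT\subseteq\T\}$. Since an element of $\tT$ lies in $\mathfrak c$ as soon as it multiplies the $\Z$-basis $(1,0,0)$, $(0,1,0)$, $(0,\sqrt2,0)$, $(0,0,1)$, $(0,0,\sqrt3)$ of $\tT$ into $\T$, a direct check against the congruences defining $\T$ in \eqref{eqTrep} gives $\mathfrak c=2\tT$. Consequently $\T/\mathfrak c$ is a finite ring of order $4$ killed by $2$; in particular, if $\fm\lhd\T$ is a maximal ideal of odd residue characteristic, then $\fm$ is comaximal with the conductor, $\mathfrak c+\fm=\T$, and hence $\mathfrak c+\fm\tT=\tT$ as well.

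The core of the argument is then to exhibit a single generator of $\fm$ lying in $\T$. Because $\tT$ is a PID, $\fm\tT=g\,\tT$ for some $g\in\tT$, and comaximality with $\mathfrak c=2\tT$ forces $g$ to be a unit modulo $2\tT$. I would then check that the reduction map $\tT^\times\to(\tT/2\tT)^\times$ is surjective. Indeed $\tT/2\tT$ is a product of three local Artinian $\F_2$-algebras, the two quadratic factors being isomorphic to $\F_2[\epsilon]/(\epsilon^2)$ since $2$ ramifies in $\Q(\sqrt2)$ and in $\Q(\sqrt3)$; hence $(\tT/2\tT)^\times\cong(\Z/2\Z)^2$, and this group is generated by the images of the fundamental units $1+\sqrt2$ and $2+\sqrt3$. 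Multiplying $g$ by a suitable element of $\tT^\times$, we may therefore assume $g\equiv1\pmod{2\tT}$, i.e. $g\in1+\mathfrak c\subseteq\T$.

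It remains to show $\fm=g\,\T$. One inclusion is formal: comaximality with $\mathfrak c$ implies $\fm\tT\cap\T=\fm$ (if $x\in\T\cap\fm\tT$ then $\mathfrak c x\subseteq\fm$, and writing $1=m+c$ with $m\in\fm$, $c\in\mathfrak c$ yields $x=mx+cx\in\fm$), so $g\in\fm$ and thus $g\,\T\subseteq\fm$. For the reverse inclusion, take $x\in\fm\subseteq\fm\tT=g\,\tT$ and write $x=g h$ with $h\in\tT$; since $g-1\in\mathfrak c$, an ideal of $\tT$, we have $(g-1)h\in\mathfrak c\subseteq\T$, so $h=x-(g-1)h\in\T$ and $x\in g\,\T$. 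Hence $\fm$ is principal. Finally $g$ is a non-zero-divisor, being a unit in $\tT\otimes\Q$ (as $\fm$ has finite index in $\T$ and $\T\otimes\Q=\tT\otimes\Q$), so $\T_\fm$ is a one-dimensional local ring with principal maximal ideal generated by a non-zero-divisor; such a ring is a discrete valuation ring, in particular Gorenstein — alternatively one invokes \cite{Tilouine} directly.

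The step I expect to be the real obstacle is the surjectivity of $\tT^\times\to(\tT/2\tT)^\times$, since this is precisely what allows the generator to be pushed down from $\tT$ to $\T$; equivalently, granting $\Pic(\tT)=0$, it amounts to the vanishing of $\Pic(\T)$. If it failed, $\fm$ would only be locally principal, carrying a possibly nonzero $2$-primary class in $\Pic(\T)$. It is the odd-residue-characteristic hypothesis that saves the day, by forcing $\fm$ to be disjoint from the conductor, which is supported entirely at $2$; everything else is routine bookkeeping with the congruences in \eqref{eqTrep} together with the classical fact that $\Q(\sqrt2)$ and $\Q(\sqrt3)$ have class number one.
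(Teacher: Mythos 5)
Your proof is correct and takes a genuinely cleaner route than the paper's. Both arguments rest on the same two pillars --- that ideals of $\T$ coprime to $2$ are in bijection with ideals of the normalization $\tT$ coprime to $2$, and that $\tT$ is a product of PIDs --- but where the paper then exhibits a generator of $\fm$ lying in $\T$ by explicit case analysis (three cases depending on which factor of $\tT$ the prime $p$ lives in, and within those whether $p$ is split or inert), you replace the casework with a single structural fact: the reduction map $\tT^\times\to(\tT/\mathfrak c)^\times$ is surjective, verified by noting that the fundamental units $1+\sqrt2$ and $2+\sqrt3$ already generate $(\tT/2\tT)^\times\cong(\Z/2\Z)^2$. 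Once that is in hand, any generator $g$ of $\fm\tT$ can be scaled by a global unit to force $g\equiv1\pmod{\mathfrak c}$, hence $g\in\T$, and the verification $\fm=g\T$ is the same formal computation both proofs use. Your version is more conceptual (it isolates the vanishing of $\Pic(\T)$ away from the conductor as the real content), would transfer more easily to other levels, and also pins down the conductor exactly as $\mathfrak c=2\tT$ --- a sharper statement than the paper's $[\tT:\T]=2^3$, which alone does not determine $\mathfrak c$. The paper's casework, on the other hand, has the minor virtue of producing explicit generators for each $\fm$ without appealing to unit groups. Both are fine; yours is arguably the better-organized argument.
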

\begin{proof} Since $$\disc(\T)=[\tT:\T]^2\cdot \disc(\tT)=[\tT:\T]^2 \cdot 2^5\cdot 3,$$ we get 
$[\tT:\T]=2^3$. Let $I_{\tT, 2'}$ be the set of ideals $I\lhd \tT$ such that $\tT/I$ is a finite ring of odd order. Let 
$I_{\T, 2'}$ be the set of ideals $I\lhd \T$ such that $\T/I$ is a finite ring of odd order. The 
argument of the proof of Proposition 7.20 in \cite{Cox} shows that the map $I\mapsto I\cap \T$ gives a bijection from $I_{\tT, 2'}$ 
to $I_{\T, 2'}$, with the inverse given by $I\mapsto I\tT$. Moreover, 
the proof of that proposition shows that for $I\in I_{\tT, 2'}$ we have $\tT/I\cong \T/I\cap \T$, so that 
this bijection restricts to a bijection between the maximal ideals of $\tT$ and $\T$ of odd residue 
characteristic. 

Since $\tT$ is a direct product of Euclidean domains, every ideal $I\in I_{\tT, 2'}$ is principal. Write $I=\theta\tT$. If $\theta\in \T$, 
then $I\cap \T= \theta\T$ is also principal, since $(\theta\T)\tT=\theta\tT$. Therefore, to prove the proposition it is 
enough to show that for every maximal ideal $\fm\in I_{\tT, 2'}$ we can choose a generator which lies in $\T$. 
Let $p>2$ be the residue characteristic of $\fm=\theta\tT$. If we write $\fm=\fm'\times \fm''\times \fm'''$, 
where $\fm'\lhd \Z$, $\fm''\lhd \Z[\sqrt{2}]$, $\fm'''\lhd \Z[\sqrt{3}]$, then 
one of these ideals is maximal of residue characteristic $p$, and the other two are equal to the corresponding ring. 
We consider three cases depending on which of the three ideals is proper. 

Case 1: $\fm'=p\Z$. Then $\theta=(p, 1,1)\in \T$. 

Case 2: $\fm''$ is proper. If $(p)$ is inert in $\Z[\sqrt{2}]$, then we can take $\theta=(1, p, 1)\in \T$. Now suppose 
$p=(\alpha+\beta\sqrt{2})(\alpha-\beta\sqrt{2})$ splits, where $\alpha, \beta\in \Z$. 
Note that $\alpha$ must be odd. If $\beta$ is even, then $\theta=(1, \alpha\pm \beta\sqrt{2}, 1)\in \T$. 
If $\beta$ is odd, then $\theta=(1, \alpha\pm \beta\sqrt{2}, 2+\sqrt{3})\in \T$, as $2+\sqrt{3}$ is a unit in $\Z[\sqrt{3}]$. 

Case 3: $\fm'''$ is proper. If $(p)$ is inert in $\Z[\sqrt{3}]$, then we can take $\theta=(1, 1, p)\in \T$. 
If $p=3$, then $\theta=(1, 1+\sqrt{2}, \sqrt{3})\in \T$, since $1+\sqrt{2}$ is a unit in $\Z[\sqrt{2}]$. Finally, suppose 
$p=(\alpha+\beta\sqrt{3})(\alpha-\beta\sqrt{3})$, where $\alpha, \beta\in \Z$. Considering $p=\alpha^2-3\beta^2$ 
modulo $2$, we get $1\equiv (\alpha+\beta)^2\ \mod\ 2$, so that $\alpha$ and $\beta$ have different parity.  
If $\alpha$ is odd and $\beta$ is even, then $\theta=(1,1, \alpha\pm\beta\sqrt{3})\in \T$. 
If $\alpha$ is even and $\beta$ is odd, then $\theta=(1,1+\sqrt{2}, \alpha\pm\beta\sqrt{3})\in \T$. 
%Since $\fm$ is principal, the second claim of the proposition follows from \cite[p. 329]{Tilouine}. 
\end{proof}

\begin{rem}
Let $\cO=\Z[i]$ be the Gaussian integers. Let $\cO'=\Z+3\cO=\Z+3i\Z$ be an order in $\cO$. We have $[\cO:\cO']=3$. 
The ideal $\fm=(2+i)\cO$ is maximal and $\cO/\fm\cong \F_5$. On the other hand, $\fm\cap \cO'=(5, 1+3i)\cO'$ 
is not principal, although $(5, 1+3i)\cO=\fm$. This indicates that 
Proposition \ref{propPrincipal} is not a special case of a general fact about orders. 
\end{rem}

\begin{defn}
The \textit{Eisenstein ideal} of $\T$ is the ideal $\cE\lhd \T$ generated by $T_\ell-(\ell+1)$ for all primes $\ell\nmid 65$. 
A maximal ideal $\fm\lhd \T$ in the support of the Eisenstein ideal is called an \textit{Eisenstein maximal ideal}. 
\end{defn}

\begin{prop}\label{propT/E} We have 
$$
\T/\cE \cong \Z/84\Z\cong \Z/4\Z\times \Z/3\Z\times \Z/7\Z.
$$  
\end{prop}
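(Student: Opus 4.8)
The plan is to compute $\cE$ explicitly from the presentation of $\T$ as an order in $\tT=\Z\times\Z[\sqrt{2}]\times\Z[\sqrt{3}]$ given by \eqref{eqExplHecke}--\eqref{eqTrep}, and then to read off the quotient.

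First I would identify a finite generating set for $\cE$. By \eqref{eqExplHecke}, $\cE$ contains
\begin{align*}
T_2-3&=(-4,\ -4+\sqrt{2},\ -3+\sqrt{3}), & T_3-4&=(-6,\ -4+\sqrt{2},\ -3-\sqrt{3}),\\
T_{11}-12&=(-10,\ -10-\sqrt{2},\ -15+\sqrt{3}),
\end{align*}
and I claim that these three elements generate $\cE$. Since $[\SL_2(\Z):\Gamma_0(65)]=84$, the Sturm bound for weight $2$ and level $65$ is $14$. For each of the three Galois orbits of newforms $f$ in $S_2(\Gamma_0(65))$ and each prime $\mathfrak{l}$ of the corresponding coefficient ring, \eqref{eqExplHecke} supplies the eigenvalues $a_\ell(f)$ for $\ell\in\{2,3,5,11\}$ (and shows $T_5^2=T_1$, so the $U_5$-eigenvalue is $\pm1$); adjoining the values of $a_7(f)$ and $a_{13}(f)$, one checks in each relevant case that $a_n(f)$ agrees modulo $\mathfrak{l}$ with the $n$-th $q$-expansion coefficient of a suitable weight-$2$ Eisenstein eigenform of level $65$ for all $n\le14$, and hence --- by Sturm's theorem --- for all $n$. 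It follows that $a_\ell(f)\equiv\ell+1\pmod{\mathfrak{l}}$ for every $\ell\nmid65$, i.e. $T_\ell-\ell-1\in(T_2-3,\,T_3-4,\,T_{11}-12)$ for all such $\ell$; thus $\cE=(T_2-3,\,T_3-4,\,T_{11}-12)$.

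With $\cE$ presented in this way, $\T/\cE$ is a finite computation, which I would carry out one rational prime at a time. For $p=3$ and $p=7$, the index $[\tT:\T]=2^3$ (see the proof of Proposition \ref{propPrincipal}) is a unit in $\Z_p$, so $\T\otimes\Z_p=\tT\otimes\Z_p$ is a product of discrete valuation rings; computing the $p$-adic valuations of the components of the three generators shows that $\cE\otimes\Z_p$ is the unit ideal in all but one of these factors, in which it is the maximal ideal, so the $p$-primary part of $\T/\cE$ is $\F_3$, respectively $\F_7$. For $p=2$ one first observes, using \eqref{eqTrep}, that $\T/2\T$ has no idempotents other than $0$ and $1$ --- an idempotent of $\T/2\T$ maps to $0$ or $1$ in $\tT/2\tT$, and the kernel of $\T/2\T\to\tT/2\tT$ is nil, since $2y^2\in\T$ for every $y\in\tT$ --- so $\T\otimes\Z_2$ is local; a direct $\Z_2$-linear computation of the sublattice of $\T\otimes\Z_2$ spanned by the $v_i(T_j-j-1)$ then gives that the $2$-primary part of $\T/\cE$ is $\Z/4\Z$. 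The same valuation check shows $\cE\otimes\Z_p=\T\otimes\Z_p$ for every other $p$, in particular for $p=5,13$. Assembling the local pieces yields $\T/\cE\cong\Z/4\Z\times\Z/3\Z\times\Z/7\Z\cong\Z/84\Z$. (Equivalently, one may feed the $15$ vectors $v_i(T_j-j-1)$, $1\le i\le5$, $j\in\{2,3,11\}$, written in the $\Z$-basis $v_1,\dots,v_5$ of $\T$, into a Smith normal form computation, which returns the invariant factors $1,1,1,1,84$.)

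The step I expect to be the main obstacle is the first one: establishing that finitely many --- indeed the three displayed --- elements $T_\ell-\ell-1$ already generate $\cE$. Without it, the computation only shows that $\T/\cE$ is a quotient of $\Z/84\Z$. The subtlety is that these generators involve no Hecke operator at $5$ or at $13$, whereas both $5$ and $13$ lie below the Sturm bound $14$; so the $q$-expansion comparison must track the $U_5$- and $U_{13}$-eigenvalues of each newform (and the relevant constant terms), pairing each newform modulo $\mathfrak{l}$ with whichever of the three weight-$2$ Eisenstein eigenforms of level $65$ has matching $U_5$- and $U_{13}$-eigenvalue --- for instance, reducibility of the $\sqrt{3}$-newform modulo the prime over $3$ forces its $U_{13}$-eigenvalue to equal $+1$, because $13\equiv1\pmod3$. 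A secondary complication is the computation at $p=2$, where $\T_{\fm_2}$ is not Gorenstein (cf. the discussion following Theorem \ref{thmIntro}), so it cannot be handled by the duality shortcut available at $p=3$ and $p=7$.
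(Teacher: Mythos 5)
Your proposal takes a genuinely different route from the paper. The paper's proof never tries to find a finite generating set for $\cE$: it shows $\T/\cE$ is cyclic by exhibiting $T_5$ as a $\Z$-linear combination of $T_\ell$'s with $\ell\nmid 65$, bounds the order from above by plugging the $\Z$-basis expansions of $T_7$ and $T_{29}$ into the relations $T_\ell\equiv\ell+1$ to get $168\equiv 0$ and $252\equiv 0$ (hence $n\mid 84$), and bounds it from below via the Eichler--Shimura congruence, which forces $\cE$ to annihilate the cuspidal divisor group $\cC\cong\Z/2\Z\times\Z/4\Z\times\Z/3\Z\times\Z/7\Z$ of exponent $84$ (hence $84\mid n$). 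That upper-and-lower squeeze avoids the need to identify $\cE$ exactly, which is precisely where your version runs into trouble.

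There is a genuine gap in your first step, and it is not merely an issue of care in the $U_5$/$U_{13}$ bookkeeping you flag. Set $I=(T_2-3,\,T_3-4,\,T_{11}-12)$. Your Sturm argument, even carried out correctly with the constant terms and the $p\mid N$ eigenvalues included, only yields a statement modulo each maximal ideal $\mathfrak{l}$: namely $a_\ell(f)\equiv\ell+1\ (\mathrm{mod}\ \mathfrak{l})$ for all $\ell\nmid 65$, i.e.\ $T_\ell-(\ell+1)\in\mathfrak{l}$. The inference you draw from this, ``$T_\ell-\ell-1\in I$,'' does not follow: it would show $T_\ell-(\ell+1)$ lies in every maximal ideal containing $I$, i.e.\ in $\mathrm{rad}(I)$, and $I$ is \emph{not} radical here (your own Smith-normal-form computation gives $\T/I\cong\Z/84\Z$, which has a $\Z/4\Z$ factor). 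So what you actually obtain is $\mathrm{rad}(\cE)=\mathrm{rad}(I)$, which pins $\T/\cE$ down only to a cyclic group whose order has radical $42$, i.e.\ $n\in\{42,84\}$. To close this you would have to run the congruence argument modulo $\fm_2^2$ (or otherwise control $\cE$ at the non-reduced point $\fm_2$), which is exactly the delicate spot: $\fm_2$ is a prime of fusion where the three components of $\Spec\T$ meet and $\T_{\fm_2}$ is non-Gorenstein. A secondary problem is the Sturm comparison itself: the weight-$2$ Eisenstein eigenforms at level $65$ have nonzero constant term, so ``$a_n(f)$ agrees with $a_n(E)$ for $n\le 14$'' must be read with $n=0$ excluded and a correction for that, which your sketch elides. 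The paper's lower bound via the cuspidal group sidesteps both difficulties entirely, since it gives $84\mid n$ without ever producing explicit generators of $\cE$.
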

\begin{proof}
First, we explain how to compute the expansion of an arbitrary Hecke operator $T_m\in \T$ in terms of 
the $\Z$-basis $\{T_1, T_2, T_3, T_5, T_{11}\}$ of $\T$. Up to Galois conjugacy, there are three normalized 
$\T$-eigenforms in $S_2(65)$. The three coordinates of $T_m$ in the ring on the right hand-side of \eqref{eqTrep} 
are the eigenvalues with which $T_m$ acts on these eigenforms. % (these eigenvalues can be computed using \texttt{Magma}). 
Once we have this representation of $T_m$, thanks to \eqref{eqExplHecke}, 
finding the expansion of $T_m$ in terms of our basis amounts to solving a system of five linear equations in five variables. 
This strategy yields 
\begin{align*}
T_7 &= 2T_1-T_2-6T_3+9T_5-5T_{11},\\
%T_{13} &= -T_3+T_5-T_{11},\\ 
%T_{17}&= -4T_3+2T_5-2T_{11},\\ 
T_{19} &= 2T_1+2T_2-4T_3+8T_5-3T_{11},\\
%T_{23} &= 2T_1-7T_3+10T_5-6T_{11},\\
T_{29} &= -4T_1+T_2+12T_3-13T_5+9T_{11}.
\end{align*}

The Hecke operators $T_\ell$ for primes $\ell\nmid 65$ are all congruent to integers modulo $\cE$. 
Since $T_5=(T_7-T_{19})+3T_2+2T_3+2T_{11}$, we conclude that all Hecke operators are congruent 
to integers. Hence the natural map $\Z\to \T/\cE$ is surjective. We cannot have $\T/\cE=\Z$, for then 
there would exist a cusp form $f\in S_2(65)$ such that $T_\ell f=(\ell+1)f$, which would 
contradict the Ramanujan-Petersson bound. %; cf. proof of \cite[Prop. 9.7]{Mazur}. 
Therefore, $\T/\cE\cong \Z/n\Z$ for some integer $n$. 
Note that $T_5\equiv 29\ (\mod\ \cE)$. From the expansion of $T_7$, we obtain $168=2^3\cdot 3\cdot 7\equiv 0\ (\mod\ \cE)$;  
from the expansion of $T_{29}$, we obtain $252=2^2\cdot 3^2\cdot 7\equiv 0\ (\mod\ \cE)$; thus,  
$n$ divides $4\cdot 3\cdot 7=84$. 
On the other hand, the Eichler-Shimura congruence \cite[p. 89]{Mazur} implies that $\cE$ annihilates 
$J(\Q)_\tor\cong \Z/2\Z\times\Z/4\Z\times \Z/3\Z\times \Z/7\Z$; see Proposition \ref{propC=Tor}. 
Hence $n$ is divisible by the exponent of this group, which is $84$. 
\end{proof}

\begin{lem}
The Hecke operators $T_5$ and $T_{13}$ act on $\T/\cE\cong \Z/4\Z\times \Z/3\Z\times \Z/7\Z$ 
as $(1, -1, 1)$ and $(1, 1, -1)$, respectively. 
\end{lem}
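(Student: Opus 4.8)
The plan is to reduce the statement to the two congruences $T_5\equiv 29$ and $T_{13}\equiv 13$ modulo the Eisenstein ideal $\cE$, and then read off the action on each factor of $\T/\cE\cong\Z/84\Z\cong\Z/4\Z\times\Z/3\Z\times\Z/7\Z$ by the Chinese Remainder Theorem: under this identification the residue class of $29$ maps to $(1,-1,1)$ (since $29\equiv 1\pmod 4$, $29\equiv-1\pmod 3$, $29\equiv1\pmod7$) and the residue class of $13$ maps to $(1,1,-1)$, which is exactly the asserted action of $T_5$ and $T_{13}$.

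The congruence $T_5\equiv 29\pmod{\cE}$ has already been obtained inside the proof of Proposition~\ref{propT/E} (from $T_5=(T_7-T_{19})+3T_2+2T_3+2T_{11}$ together with $T_\ell\equiv\ell+1\pmod{\cE}$ for $\ell\in\{2,3,7,11,19\}$), so only the value of $T_{13}$ modulo $\cE$ remains. For this I would apply the computational device used in the proof of Proposition~\ref{propT/E}: record the eigenvalues with which $U_{13}=T_{13}$ acts on the three Galois orbits of normalized newforms in $S_2(65)$ --- each such eigenvalue is $\pm1$, since $13$ exactly divides $65$ and the weight is $2$ (Atkin--Lehner--Li) --- and solve the resulting $5\times5$ linear system using \eqref{eqExplHecke} to express $T_{13}$ in the $\Z$-basis $\{T_1,T_2,T_3,T_5,T_{11}\}$. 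The eigenvalue triple is $a_{13}=(-1,-1,1)$ (for the rational newform this is the conductor-$65$ elliptic curve, for which $a_{13}=-1$ follows from a point count modulo $13$; the remaining two values are computed likewise), and this gives the identity $T_{13}=-T_3+T_5-T_{11}$ in $\T$.

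Finally I would reduce this identity modulo $\cE$: since $T_3\equiv 4$ and $T_{11}\equiv 12\pmod{\cE}$ directly from the definition of $\cE$, and $T_5\equiv 29\pmod{\cE}$ as recalled above, one gets $T_{13}\equiv -4+29-12=13\pmod{\cE}$, and the lemma follows from the Chinese Remainder computation of the first paragraph.

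The only non-formal input is the expansion of $T_{13}$ in the fixed $\Z$-basis, equivalently the list of $U_{13}$-eigenvalues on the three newform orbits; this is a routine finite computation (carried out, e.g., in \texttt{Magma}) rather than a conceptual difficulty, and is the natural candidate for ``the hard part.'' If one wished to avoid it, the structure of the weight-$2$ Eisenstein series of level $65$ yields $(U_5-1)(U_5-5)\equiv(U_{13}-1)(U_{13}-13)\equiv 0\pmod{\cE}$, which combined with $T_5\equiv 29$ already determines the action of $T_5$ and everything about the action of $T_{13}$ except its $\Z/7\Z$-component; pinning down that last component then amounts to excluding the non-modular Eisenstein eigensystem with $U_5\equiv U_{13}\equiv 1\pmod 7$, for which a small computation again appears to be the cleanest route.
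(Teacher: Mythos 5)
Your proposal is correct and follows essentially the same route as the paper: reduce to the two congruences $T_5\equiv 29$ and $T_{13}\equiv 13\pmod{\cE}$ and then read off the CRT components. The paper simply asserts the identity $T_{13}=-T_3+T_5-T_{11}$ without explanation; your filled-in derivation of that identity (via the $U_{13}$-eigenvalue triple $(-1,-1,1)$ on the three newform orbits, which one can also check directly against the coordinates in \eqref{eqExplHecke}) is the intended computation, and your arithmetic $-4+29-12=13$ and the mod-$4$, mod-$3$, mod-$7$ reductions all check out.
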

\begin{proof}
In the proof of Proposition \ref{propT/E} we computed that $T_5\equiv 29\ (\mod\ \cE)$. 
Similarly, $T_{13} = -T_3+T_5-T_{11}\equiv 13\ (\mod\ \cE)$. From this the claim of the 
lemma immediately follows since, for example, $29 \equiv 1\ (\mod\ 4)$, $29 \equiv -1\ (\mod\ 3)$, and $29 \equiv 1\ (\mod\ 7)$. 
\end{proof}

\begin{rem} We note that $T_5$ and $T_{13}$ are actually equal to the negatives of 
the Atkin-Lehner involutions $W_5$ and $W_{13}$ acting on $S_2(65)$. The conclusion 
$(\T/\cE)_\odd\cong \Z/3\Z\times \Z/7\Z$ then can be deduced 
from Theorem 3.1.3 in \cite{Ohta}. 
\end{rem}

Proposition \ref{propT/E} implies that there are three Eisenstein maximal ideals in $\T$:   
\begin{align*}
\fm_2 & :=(\cE, 2) = (\cE, 2, T_5-1, T_{13}-1),\\ 
\fm_3 & :=(\cE, 3) = (\cE, 3, T_5+1, T_{13}-1),\\ 
\fm_7 & :=(\cE, 7) = (\cE, 7, T_5-1, T_{13}+1).
\end{align*}

\begin{prop}\label{propm2} We have:
\begin{itemize}
\item[(i)] The ideal $\fm_2\lhd \T$ is equal to the ideal 
$$
\left((2,1,1)\tT\right)\cap \T = \left\{(a, b_1 + b_2\sqrt{2}, c_1+c_2\sqrt{3})\in \T\ \big|\ a\in 2\Z\right\},  
$$
which is the unique maximal ideal of $\T$ of residue characteristic $2$. 
\item[(ii)] $\fm_2^n$ is not principal for any $n\geq 1$. 
\item[(iii)] $\T_{\fm_2}$ is not Gorenstein. 
\end{itemize}
\end{prop}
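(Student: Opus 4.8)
All three parts will be read off from the explicit presentation \eqref{eqTrep} of $\T$ as an order in $\tT=\Z\times\Z[\sqrt 2]\times\Z[\sqrt 3]$.

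\emph{Part (i).} The displayed set is manifestly $((2,1,1)\tT)\cap\T$, and it is also the kernel of the surjective ring homomorphism $\T\to\F_2$ sending $(a,b_1+b_2\sqrt 2,c_1+c_2\sqrt 3)$ to $a\bmod 2$; hence it is a maximal ideal of residue characteristic $2$. Since $\fm_2$ is also such an ideal (Proposition \ref{propT/E}), it is enough to show that $\T$ has exactly one maximal ideal of residue characteristic $2$, equivalently that $\T\otimes_\Z\Z_2$ is local, equivalently that it has no idempotents other than $0$ and $1$. Now $\T\otimes\Z_2$ sits inside $\tT\otimes\Z_2=\Z_2\times\Z_2[\sqrt 2]\times\Z_2[\sqrt 3]$, whose idempotents are the eight triples in $\{0,1\}^3$, and the congruences in \eqref{eqTrep} (which persist after $\otimes\Z_2$) force such a triple $(e_1,e_2,e_3)$ to satisfy $e_1=e_2=e_3$. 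So the only idempotents are $0$ and $1$, and (i) follows.

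\emph{Part (iii).} Set $\widehat\T:=\T\otimes_\Z\Z_2$, which is the completion $\T_{\fm_2}$ by (i); it is a one-dimensional local ring, Cohen--Macaulay because $2\in\fm_2$ is a nonzerodivisor, with integral closure $S:=\Z_2\times\Z_2[\sqrt 2]\times\Z_2[\sqrt 3]$. I would first compute the conductor $\mathfrak c=\{x\in S\mid xS\subseteq\widehat\T\}$: writing an ideal of $S$ as $I_1\times I_2\times I_3$, the congruences \eqref{eqTrep} show that $I_1\times I_2\times I_3\subseteq\widehat\T$ forces $I_1\subseteq 2\Z_2$, $I_2\subseteq 2\Z_2[\sqrt 2]$, $I_3\subseteq 2\Z_2[\sqrt 3]$, so $\mathfrak c=2S$. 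Then $\ell_{\widehat\T}(S/\widehat\T)=3$ (as $[S:\widehat\T]=[\tT:\T]=2^3$), while $\ell_{\widehat\T}(S/\mathfrak c)=\dim_{\F_2}(S/2S)=1+2+2=5$. By the classical criterion (Bass) that a reduced one-dimensional Cohen--Macaulay local ring with module-finite normalization is Gorenstein if and only if $\ell(S/\mathfrak c)=2\,\ell(S/\widehat\T)$, and since $5\neq 6$, the ring $\T_{\fm_2}$ is not Gorenstein. (Alternatively, one can compute the multiplication table of $v_1,\dots,v_5$ modulo $2$, check that the socle of the Artinian ring $\T/2\T$ is two-dimensional over $\F_2$ — spanned by the images of $v_4$ and $v_2+v_3$ — and invoke that a one-dimensional Cohen--Macaulay local ring is Gorenstein iff its quotient by a nonzerodivisor has one-dimensional socle.)

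\emph{Part (ii).} It suffices to show $\fm_2^n\widehat\T$ is not principal for any $n\ge 1$, since a principal ideal of $\T$ stays principal after $\otimes\Z_2$. Suppose $\fm_2^n\widehat\T=(\theta)$. In $S$ we have $\theta S=(\fm_2 S)^n$, an ideal with nonzero image in each of the three factors of $S$, so $\theta$ is a nonzerodivisor; hence multiplication by $\theta^j$ gives $\widehat\T/\fm_2^n\widehat\T\cong\fm_2^{jn}\widehat\T/\fm_2^{(j+1)n}\widehat\T$ for all $j\ge 0$, and therefore $\ell(\widehat\T/\fm_2^{(j+1)n}\widehat\T)=(j+1)\,\ell(\widehat\T/\fm_2^n\widehat\T)$. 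Comparing with the Hilbert--Samuel polynomial $\ell(\widehat\T/\fm_2^m\widehat\T)=e_0m-e_1$ (valid for $m\gg 0$, of degree $\dim\widehat\T=1$) and letting $j\to\infty$ forces $e_1=0$. But $\widehat\T$ is a reduced, unmixed (three minimal primes, each of dimension one) one-dimensional Cohen--Macaulay local ring which is \emph{not} a domain, because $\T\otimes\Q_2\cong\Q_2\times\Q_2(\sqrt 2)\times\Q_2(\sqrt 3)$; hence it is not regular, so $e_0=e(\T_{\fm_2})\ge 2$ by Nagata's theorem, and then Northcott's inequality $e_1\ge e_0-\ell(\widehat\T/\fm_2\widehat\T)=e_0-1\ge 1$ contradicts $e_1=0$.

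The main obstacle is Part (ii): the passage from ``$\fm_2^n$ principal'' to ``$e_1(\T_{\fm_2})=0$'' has to be set up carefully around the exact multiplicativity of Hilbert--Samuel lengths along the chain $\{\fm_2^{jn}\}$, and one must correctly invoke that multiplicity one forces regularity and that Northcott's bound for the maximal ideal is available; modulo these cited commutative-algebra facts, Parts (i) and (iii) are finite computations with the congruences \eqref{eqTrep} and with the multiplication in $\T$.
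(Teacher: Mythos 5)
Your proposal is correct, but all three parts are proved by routes quite different from the paper's, and in two of them the comparison is instructive.

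For (i), both arguments come down to showing $\T$ is local at $2$; the paper does this by exhibiting $((2,1,1)\tT)\cap\T$ as simultaneously equal to $((1,\sqrt2,1)\tT)\cap\T$ and $((1,1,1+\sqrt3)\tT)\cap\T$ and noting each ring $\Z$, $\Z[\sqrt2]$, $\Z[\sqrt3]$ has a unique prime above $2$, while you pass to $\T\otimes\Z_2$ and check directly from \eqref{eqTrep} that it has no nontrivial idempotents. These are essentially equivalent and both fine.

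For (ii), the paper's argument is dramatically shorter than yours: it simply observes that $(1,0,0)\in\tT$ multiplies $\fm_2^n$ into itself (because the first coordinate of any element of $\fm_2^n$ lies in $2^n\Z$, and $(2,0,0)^n=(2^n,0,0)\in\fm_2^n$), so $(1,0,0)\in\End_\T(\fm_2^n)$, while $(1,0,0)\notin\T$; since a principal ideal of $\T$ generated by a nonzerodivisor is free of rank one and therefore has endomorphism ring exactly $\T$, this rules out principality. Your Hilbert--Samuel argument --- deducing $e_1=0$ from the exact additivity of lengths along $\{\fm_2^{jn}\}$ and contradicting Nagata plus Northcott --- is a valid but much heavier alternative, and the bookkeeping you flag as ``the main obstacle'' is precisely the complexity that the paper's one-line observation avoids. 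It would be worth internalizing the $\End_\T(I)=\T$ criterion for principal fractional ideals of an order; it is the standard tool for this kind of question.

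For (iii), the paper applies Tilouine's criterion that $\T_{\fm_2}$ is Gorenstein iff $\dim_{\F_2}(\T/2\T)[\overline\fm_2]=1$, and exhibits two elements of the socle. Your primary argument uses the Bass conductor criterion $\ell(S/\mathfrak c)=2\,\ell(S/\widehat\T)$ after computing $\mathfrak c=2S$; this is a genuinely different and quite clean route, and the lengths $5\neq 6$ are correct. Your parenthetical alternative, exhibiting $v_4=(0,2\sqrt2,0)$ and $v_2+v_3=(0,2,2)$ as a two-dimensional piece of the socle, is in fact the more careful version of the paper's argument: the paper lists $(2,0,0)$ and $(0,2,0)$, but $(0,2,0)$ is \emph{not} annihilated by $\overline\fm_2$ --- for instance $\theta=(0,\sqrt2,1+\sqrt3)\in\fm_2$ gives $\theta\cdot(0,2,0)=(0,2\sqrt2,0)\notin 2\T$ since $(0,\sqrt2,0)$ violates the congruence $b_2\equiv c_2$. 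The paper's element $(2,0,0)$ (which is $\equiv v_2+v_3\bmod 2\T$) and your $v_4$ are the right pair, so the conclusion stands either way.
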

\begin{proof}
(i) The uniqueness of the maximal ideal of residue characteristic $2$ implies that it must be the Eisenstein maximal 
ideal $\fm_2$. To prove the uniqueness, note that 
each of the rings $\Z$, $\Z[\sqrt{2}]$, $\Z[\sqrt{3}]$ has a unique maximal ideal of residue characteristic $2$; these are  
generated by $2$, $\sqrt{2}$, and $1+\sqrt{3}$, respectively. One easily checks that 
$$
\fm:=((2,1,1)\tT)\cap \T =((1,\sqrt{2},1)\tT)\cap \T=((1,1,1+\sqrt{3})\tT)\cap \T,
$$
and $\T/\fm\cong \F_2$. 

(ii) To prove this statement it is enough to observe that $(1,0,0)\in \tT$ is in $\End_\T(\fm_2^n)$ but $(1,0,0)\not\in \T$. 
%Suppose $\fm_2^n$ is principal, generated by $\theta=(a, b_1+b_2\sqrt{2}, c_1+c_2\sqrt{3})$. 
% Clearly we must have $a=\pm 2^n$. Since $(1,0,0)\not\in \T$, to obtain $(2^n,0,0)\in \fm_2^n$ as a multiple of $\theta$, 
% we must have either $b_1+b_2\sqrt{2}=0$ or $c_1+c_2\sqrt{3}=0$. 
%But then we cannot obtain $(0,2^n,0)\in \fm_2^n$ or $(0,0,2^n)\in \fm_2^n$ as a multiple of $\theta$. This leads to a contradiction. 

(iii) We apply \cite[Prop. 1.4 (iii)]{Tilouine}: Let $\overline{\fm}_2$ denote the image of $\fm_2$ in $\T/2\T$. 
Then $\T_{\fm_2}$ is Gorenstein if and only if $\dim_{\F_2}(\T/2\T)[\overline{\fm}_2]=1$. Note that 
$(2,0,0)$ and $(0,2,0)$ have distinct non-zero 
images in $\T/2\T$, since otherwise $(2,2,0)\in 2\T$, which would imply $(1,1,0)\in \T$. 
On the other hand, for any $\theta\in \fm_2$ we have $\theta(2,0,0)=(4a,0,0)=2(2a,0,0)\in 2\T$ for some $a\in \Z$. 
Therefore, $\overline{\fm}_2$ annihilates $(2,0,0)$, and similarly $\overline{\fm}_2$ 
annihilates $(0,2,0)$; thus, $\dim_{\F_2}(\T/2\T)[\overline{\fm}_2]\geq 2$. 
\end{proof}

$\Spec(\T)$ can be sketched as in Figure \ref{Fig1}. It has three irreducible components intersecting at $\fm_2$.  
%In particular, $\Spec(\T)$ is connected (this also can be proved geometrically as in \cite[Prop. 10.6]{Mazur}). 
The irreducible components containing the closed points $\fm_3$ and $\fm_7$ are determined by 
observing that $T_5+1=(0,2,0)$ and $T_5-1=(-2,0,-2)$, so $T_5$ acts as $-1$ (resp. $1$) on the component $\Spec(\Z[\sqrt{3}])$ 
(resp. $\Spec(\Z[\sqrt{2}])$). Finally, note that $\T_{\fm_7}\cong \Z_7$ and $\T_{\fm_3}\cong \Z_3[\sqrt{3}]$.

\begin{figure}%[h]
\begin{tikzpicture}[scale=0.5, inner sep=.5mm]
\draw[thick] (0,2) -- (8,6); 
\draw[thick] (0,3) -- (8,3); 
\draw[thick] (0,4) -- (8,0); 
\node at (6, 1) [circle,draw,fill=black][label=below:$\fm_3$]  {};
\node at (6, 3) [circle,draw,fill=black][label=below:$\fm_7$]  {};
\node at (2, 3) [circle,draw,fill=black][label=below:$\fm_2$] {};
\node at (8, 0) [label=right:$\Z{[\sqrt{3}]}$] {};
\node at (8, 3) [label=right:$\Z{[\sqrt{2}]}$] {};
\node at (8, 6) [label=right:$\Z$] {};
\end{tikzpicture}
\caption{$\Spec(\T)$}\label{Fig1}
\end{figure}

%--------------------------------------------------------

\section{Modular Jacobian}\label{sMJ}

There are exactly four cusps, denoted $[1]$, $[p]$, $[q]$ and $[pq]$, on $X_0(pq)$, where $p$ and $q$ are 
two distinct prime numbers. Let $\cC(pq)$ be the subgroup of $J_0(pq)$ generated 
by all cuspidal divisors. Since all cusps are $\Q$-rational, we have $\cC(pq)\subset J_0(pq)(\Q)$. 
Let $\Phi(p)$ and $\Phi(q)$ denote the component groups 
of $J_0(pq)$ at $p$ and $q$, and $\wp_p, \wp_q: \cC(pq)\to \Phi(p), \Phi(q)$ 
be the homomorphisms  induced by \eqref{eqCanRedPhi}.   

\begin{prop}
Let $p=5$ and $q=13$. 
Let $c_p$ and $c_q$ be the divisor classes of $[1]-[p]$ and $[1]-[q]$ in $J_0(pq)$. Denote $\cC:=\cC(pq)$. 
\begin{itemize}
\item[(i)] $\cC$ is generated by $c_p$ and $c_q$. The order of $c_p$ is $28$; the order of $c_q$ is $12$; the only relation between 
$c_p$ and $c_q$ in $\cC$ is $14 c_p = 6c_q$. This implies 
$$
\cC\cong \Z/2\Z\times\Z/4\Z\times \Z/3\Z\times \Z/7\Z. 
$$
\item[(ii)] $\Phi(p)\cong \Z/42\Z$ and $\Phi(q)\cong \Z/6\Z$. 
\item[(iii)] The order of $\wp_p(c_p)$ is $14$, and $\wp_p(c_q)=0$; this implies that there is an exact sequence
$$
0\to \langle c_q\rangle \to \cC \overset{\wp_p}{\To} \Phi(p)\to \Z/3\Z\to 0. 
$$
The order of $\wp_q(c_q)$ is $6$, and $\wp_q(c_p)=0$; this implies that there is an exact sequence
$$
0\to \langle c_p\rangle \to \cC \overset{\wp_q}{\To} \Phi(q)\to 0. 
$$
\end{itemize}
\end{prop}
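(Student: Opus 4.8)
The statement has three essentially separate parts, and I would prove them in order, deducing (iii) from (i) and (ii). For (i), the plan is to realize $\cC$ through modular units. Writing $c_{pq}$ for the class of $[1]-[pq]$, the classes $c_p,c_q,c_{pq}$ span the rank-$3$ lattice of degree-zero cuspidal divisors on $X_0(65)$, and $\cC$ is its quotient by the sublattice of divisors of modular units. I would produce such units as $\eta$-quotients $g=\prod_{d\mid 65}\eta(d\tau)^{r_d}$, using Ligozat's formula, which expresses $\ord_x(g)$ at each of the four cusps as an explicit linear form in the exponents $r_d$ and the widths. First I would exhibit a unit whose divisor is a multiple of $[1]-[p]-[q]+[pq]$, giving the relation $c_{pq}=c_p+c_q$ in $\cC$, so that $c_p,c_q$ generate $\cC$; then I would read off the orders of $c_p$ and $c_q$ and the relation lattice (the smallest $n$ for which $nc_p$, $nc_q$, and $14c_p-6c_q$ are divisors of units). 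Granting $\ord(c_p)=28$, $\ord(c_q)=12$, and the single relation $14c_p=6c_q$, the isomorphism $\cC\cong(\Z/28\Z\times\Z/12\Z)/\langle(14,-6)\rangle\cong\Z/2\Z\times\Z/4\Z\times\Z/3\Z\times\Z/7\Z$ follows by an elementary computation. (Part (i) can also be quoted from the literature on rational cuspidal divisor class groups of $X_0(N)$.)

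For (ii), I would use the Deligne--Rapoport model. As $X_0(5)$ and $X_0(13)$ both have genus $0$, the special fiber of $X_0(65)$ at the prime $5$ (resp. $13$) is two copies of $X_0(13)_{\F_5}\cong\P^1$ (resp. $X_0(5)_{\F_{13}}\cong\P^1$) crossing at the supersingular points; in particular $J$ has purely toric reduction at both primes, and $\Phi(p)$ is the component group of the Jacobian of this totally degenerate curve, which by Raynaud's description is cyclic of order $\sum_x\prod_{x'\ne x}t_{x'}$, where $x$ runs over the supersingular crossing points and $t_x=\#\Aut(E_x,C_x)/2$ is the thickness. In characteristic $5$ the unique supersingular $j$-invariant is $j=0$, with $\#\Aut(E)=6$; on $X_0(13)_{\F_5}$ this gives two crossing points of thickness $3$ (from the two $\Aut(E)$-stable cyclic $13$-subgroups) and four of thickness $1$, so $\#\Phi(5)=3+3+9+9+9+9=42$. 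In characteristic $13$ there is a unique supersingular $j$-invariant, with $\#\Aut(E)=2$, so $X_0(5)_{\F_{13}}$ has six crossing points of thickness $1$ and $\#\Phi(13)=6$. These orders (and the cyclicity) are also recorded in \cite{Ogg}.

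For (iii), (ii) determines on which component of the special fiber each cusp reduces in characteristic $p$: $[1]$ and $[q]$ lie on one component and $[p]$, $[pq]$ on the other. Hence $c_q$ reduces to the difference of two smooth points of a single $\P^1$, so $\wp_p(c_q)=0$; symmetrically $\wp_q(c_p)=0$. Combining with (i), the order of $\wp_p(c_p)$ divides $\gcd(\ord(c_p),\#\Phi(p))=\gcd(28,42)=14$ and is prime to $3$, while the order of $\wp_q(c_q)$ divides $\gcd(12,6)=6$; so it suffices to show that the $2$- and $7$-parts of $\wp_p(c_p)$ and the $2$- and $3$-parts of $\wp_q(c_q)$ are nonzero. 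I would establish this by computing these images on the minimal regular (resolved) Deligne--Rapoport model, tracking the chains of $(-2)$-curves introduced at the thickness-$3$ crossings and the linking of the cusps $[1]$ and $[p]$ with the components. Once $\ord(\wp_p(c_p))=14$ and $\ord(\wp_q(c_q))=6$ are known, writing a general element of $\ker\wp_p$ as $ac_p+bc_q$ forces $14\mid a$, hence $\ker\wp_p=\langle c_q\rangle$, and likewise $\ker\wp_q=\langle c_p\rangle$; comparing $\#\cC=168$ with $\#\Phi(p)=42$ and $\#\Phi(q)=6$ then yields the two exact sequences, with cokernel $\Z/3\Z$ in the first and $0$ in the second.

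The main obstacle is this last substantive step of (iii): upgrading the $\gcd$ upper bounds to the exact orders $14$ and $6$, equivalently the non-vanishing of those $2$-, $7$-, and $3$-parts. This is the one point where one must genuinely compute the specialization of a cuspidal class into the component group of a totally degenerate fiber with non-reduced crossings, rather than argue by orders alone; the data needed is essentially present in \cite{Ogg}, where these component groups and their interaction with the cuspidal subgroup were worked out.
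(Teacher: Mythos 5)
Your approach is essentially the paper's: (i) via modular units/eta-quotients as in the cited Chua--Ling paper, (ii) via the Deligne--Rapoport special fiber and Raynaud's method, as in Ogg and the appendix to Mazur, and (iii) by tracking which component each cusp reduces to on the (resolved) minimal regular model, which the paper delegates to the cited Papikian reference. One small caution: your formula $\sum_x\prod_{x'\neq x}t_{x'}$ does give $\#\Phi(p)$ (it is the number of spanning trees of the resolved dual graph), but Raynaud's description does not make the component group automatically cyclic --- e.g.\ two vertices joined by three paths of length $2$ has $12$ spanning trees and critical group $\Z/2\Z\times\Z/6\Z$ --- so the cyclicity of $\Phi(5)$ and $\Phi(13)$ is an extra fact, which is indeed recorded in Ogg as you note.
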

\begin{proof}
(i) follows from \cite{CL}. The groups $\Phi(p)$ and $\Phi(q)$ can be computed 
from the structure of special fibres of $X_0(pq)$ using a well-known method of Raynaud; 
see \cite[p. 214]{Ogg} or the appendix in \cite{Mazur}. Finally, by considering the reductions of the cusps in 
the special fibre of the minimal regular model of $X_0(pq)$ over $\Z_p$, one 
can determine the homomorphism $\wp_p$ and $\wp_q$; cf. \cite[p. 1161]{PapikianJL}. % and \cite{Ogg}. 
\end{proof}

\begin{prop}\label{propC=Tor}
We have $\cC=J(\Q)_\tor$. 
\end{prop}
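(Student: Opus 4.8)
The plan is to prove the two inclusions $\cC \subseteq J(\Q)_\tor$ and $J(\Q)_\tor \subseteq \cC$ separately. The first is immediate: by the Manin--Drinfeld theorem every cuspidal divisor of degree $0$ is torsion, and since all four cusps of $X_0(65)$ are $\Q$-rational, $\cC \subseteq J(\Q)_\tor$. Moreover, from part (i) of the previous proposition we already know $\cC \cong \Z/2\Z \times \Z/4\Z \times \Z/3\Z \times \Z/7\Z$, so it remains only to show that $J(\Q)_\tor$ has exactly this order, i.e.\ that there are no extra rational torsion points beyond the cuspidal ones.

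For the reverse inclusion the strategy is to bound $J(\Q)_\tor$ from above by reducing modulo several primes of good reduction and intersecting the resulting bounds. Concretely, for a prime $\ell \nmid 65$ the reduction map $J(\Q)_\tor \hookrightarrow J(\F_\ell)$ is injective (Proposition \ref{propKatz}, applied with $K = \Q_\ell$, whose absolute ramification index is $1 < \ell - 1$ once $\ell \geq 3$; for the $2$-part one takes $\ell$ odd so that $\#J(\Q)_\tor[2^\infty]$ is coprime to $\ell$). One computes $\#J(\F_\ell)$ for a few small primes $\ell$ — these are products of the values $\prod_i P_{f_i}(\ell)$ where $P_{f_i}$ is the characteristic polynomial of $\Frob_\ell$ on the abelian variety attached to the eigenform $f_i$, data already implicit in the explicit description \eqref{eqExplHecke} of the Hecke action, or simply read off from \texttt{Magma}. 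Taking $\gcd$ over two or three well-chosen primes $\ell$ (e.g.\ $\ell = 3, 7, 11, \dots$) should pin the order of $J(\Q)_\tor$ down to exactly $2^3 \cdot 3 \cdot 7 = 168$, matching $\#\cC$. Since $\cC \subseteq J(\Q)_\tor$ and the two groups have the same order, they are equal, and in particular $J(\Q)_\tor \cong \Z/2\Z \times \Z/4\Z \times \Z/3\Z \times \Z/7\Z$.

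There is one subtlety worth isolating: the $\gcd$ of the $\#J(\F_\ell)$ need not \emph{a priori} drop all the way to $168$ no matter which primes are used, and in fact the $2$-part is the delicate one, since reductions of $J$ tend to have large $2$-primary component (a point the introduction explicitly flags). So the main obstacle is the $2$-part: one must either exhibit primes $\ell$ for which $\#J(\F_\ell)_2$ is exactly $8$, or supplement the reduction argument — for instance by using that $\cE$ annihilates $J(\Q)_\tor$ (via the Eichler--Shimura relation, as in the proof of Proposition \ref{propT/E}) together with the structure of $\T/\cE \cong \Z/4\Z \times \Z/3\Z \times \Z/7\Z$ from Proposition \ref{propT/E} to constrain the module structure of $J(\Q)_\tor$ over $\T$, thereby cutting down the possibilities for the $2$-part. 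Combining the good-reduction bound at a suitable odd $\ell$ with this Eisenstein constraint should force $\#J(\Q)_\tor = 168$, completing the proof.
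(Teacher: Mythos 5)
Your approach is exactly the paper's: the inclusion $\cC\subseteq J(\Q)_\tor$ is clear, and the reverse inclusion follows from Proposition~\ref{propKatz} combined with a $\gcd$ of $\#J(\F_\ell)$ over odd primes $\ell\nmid 65$. The worry you raise about the $2$-part resolves itself computationally and no supplementary Eisenstein-ideal argument is needed: the paper uses $\ell=3$ and $\ell=11$, finding $\#J(\F_3)=2^3\cdot 3^2\cdot 7$ (so the $2$-part is already pinned down at $2^3$) and $\#J(\F_{11})=2^3\cdot 3\cdot 5\cdot 7^2\cdot 37$, whose $\gcd$ is $2^3\cdot 3\cdot 7=\#\cC$.
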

\begin{proof}
Obviously $\cC\subseteq J(\Q)_\tor$. On the other hand, $J$ has good reduction at any odd prime $p\nmid 65$, 
so by Proposition \ref{propKatz} we have an injective homomorphism $J(\Q)_\tor \hookrightarrow J(\F_p)$, 
where $J(\F_p)$ denotes the group of $\F_p$-rational points on the reduction of $J$ at $p$. The order of 
$J(\F_p)$ can be computed using \texttt{Magma}. % using the following command:  Evaluate(FrobeniusPolynomial(JZero(65), p), 1). 
We have $\# J(\F_3)=2^3\cdot 3^2\cdot 7$ and $\# J(\F_{11})=2^3\cdot 3\cdot 5\cdot 7^2\cdot 37$. 
Since the greatest common divisor of these numbers is $2^3\cdot 3\cdot 7= \#\cC$, the claim follows. 
\end{proof}

The Hecke ring $\T$ is isomorphic to a subring of endomorphisms of $J$ generated by the Hecke 
operators $T_n$ acting as correspondences on $X$. In fact, 
in our case $\T$ is the full ring of endomorphisms of $J$ 
(this can be proved as in \cite[Prop. 9.5]{Mazur}). For a maximal ideal $\fm\lhd \T$, we denote 
$$
J[\fm]=\bigcap_{\alpha\in \fm}\ker(J\xrightarrow{\alpha}J)
$$
Then $J[\fm]\subset J[p]$, where $p$ is the characteristic of $\T/\fm$. By a theorem of Mazur \cite[p. 341]{Tilouine}, $\T_\fm$ 
is Gorenstein if and only if $\dim_{\T/\fm} J[\fm]=2$. Therefore, using Proposition \ref{propPrincipal}, we conclude that $\dim_{\T/\fm} J[\fm]=2$ 
for any maximal ideal $\fm$ of odd residue characteristic. 

Let $p=3,7$ and $\fm_p$ be the corresponding Eisenstein maximal ideal. 
The Eichler-Shimura congruence relation implies that $\cE$ annihilates $J(\Q)_\tor=\cC$. 
Hence $\Z/p\Z\cong \cC_p\subset J[\fm_p]$. We have 
\begin{equation}\label{eqJ[m_p]}
0\To \Z/p\Z \To J[\fm_p] \To \mu_p\To 0,
\end{equation}
since $G_\Q$ acts on $\wedge^2 J[\fm_p]$ by the mod $p$ cyclotomic character; cf. \cite[p. 465]{RibetLL}. 
By \cite{LO}, the Shimura subgroup $\Sigma$ (= kernel of the functorial homomorphims $J_0(65)\to J_1(65)$) is 
\begin{equation}\label{eqSG}
\Sigma\cong \mu_2\times \mu_3, 
\end{equation}
%where the isomorphism is an isomorphism of group schemes. 
%Since $\mu_2$ is constant, Proposition \ref{propC=Tor} implies 
%that $\Sigma_2\subset \cC_2$. On the other hand, since $\mu_3$ is not constant, $\Sigma_3\not\subset \cC$.  
and the Eisenstein ideal $\cE$ annihilates $\Sigma$. % by \cite[Thm. 6]{LO}. 
Therefore, \eqref{eqJ[m_p]} splits 
for $p=3$:
$$
J[\fm_3]= \cC_3\times \Sigma_3\cong \Z/3\Z\times \mu_3.
$$ 
\begin{lem}
The sequence \eqref{eqJ[m_p]} does not split for $p=7$. 
\end{lem}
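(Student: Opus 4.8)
The plan is to show that $J[\fm_7]$ is not isomorphic, as a $G_\Q$-module, to $\Z/7\Z \times \mu_7$ by exhibiting a prime of good reduction at which a split extension would force more $7$-torsion rational points than actually exist. The extension \eqref{eqJ[m_p]} with $p=7$ is an extension of $\mu_7$ by $\Z/7\Z$; it splits if and only if the quotient $\mu_7$ lifts to a $G_\Q$-stable line inside $J[\fm_7]$, equivalently if and only if $J[\fm_7] \cong \Z/7\Z \times \mu_7$. Assume for contradiction that it does split. Then over any field $F$ containing $\mu_7$ (in particular over $\Q(\zeta_7)$, or over $\F_p$ for $p \equiv 1 \pmod 7$) the whole group $J[\fm_7]$ becomes constant, so $J[\fm_7] \subset J(F)$.

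First I would pick an auxiliary prime $\ell \nmid 2 \cdot 65 \cdot 7$ with $\ell \equiv 1 \pmod 7$ — for instance $\ell = 29$ — so that $\mu_7 \subset \F_\ell^\times$ and $J$ has good reduction at $\ell$. By Proposition \ref{propKatz}, reduction gives an injection $J[\fm_7] \hookrightarrow J(\F_\ell)$, and if the sequence split we would get $(\Z/7\Z)^2 \cong J[\fm_7] \hookrightarrow J(\F_\ell)$, so $49 \mid \#J(\F_\ell)$. The point is then to compute $\#J(\F_\ell)$ with \texttt{Magma} (as was already done for $\ell = 3, 11$ in the proof of Proposition \ref{propC=Tor}) and check that $49 \nmid \#J(\F_\ell)$ while $7 \mid \#J(\F_\ell)$; one should choose $\ell$ so that this holds. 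This contradiction shows the sequence cannot split. An alternative route, avoiding any new machine computation, is to argue directly that a $G_\Q$-stable copy of $\mu_7$ inside $J[7]$ would have to lie in the Shimura subgroup $\Sigma$, because $\Sigma$ is (up to isogeny) characterized among the subgroups of $J$ that are extensions of $\mu$-type by $\mu$-type near the Eisenstein primes; but by \eqref{eqSG} we have $\Sigma \cong \mu_2 \times \mu_3$, which has trivial $7$-part, so no such $\mu_7$ exists and the extension is non-split.

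I would carry out the computational version as the main line, since it is clean and self-contained: (1) recall from Proposition \ref{propPrincipal} and the discussion after Proposition \ref{propC=Tor} that $\dim_{\F_7} J[\fm_7] = 2$, so $J[\fm_7]$ really is a two-dimensional $\F_7$-space sitting in the exact sequence \eqref{eqJ[m_p]}; (2) observe that splitting of \eqref{eqJ[m_p]} over $\Q$ is equivalent to splitting as $G_{\Q(\zeta_7)}$-modules, hence to $J[\fm_7]$ becoming constant over $\Q(\zeta_7)$ — here one uses that $\Z/7\Z$ and $\mu_7$ become isomorphic and constant precisely over $\Q(\zeta_7)$, so a $G_\Q$-splitting is the same as the triviality of the resulting class after restriction to $G_{\Q(\zeta_7)}$, which is detected integrally; (3) choose a suitable split prime $\ell$ and invoke Proposition \ref{propKatz} to embed $J[\fm_7]$ into $J(\F_\ell)$; (4) read off $\#J(\F_\ell)$ from \texttt{Magma} and derive the numerical contradiction.

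The main obstacle is step (2): one must be careful that "the extension splits over $\Q$" is genuinely equivalent to the failure of the numerical divisibility over an appropriately chosen $\F_\ell$, rather than merely implied by it. The cleanest way around this is to note that if \eqref{eqJ[m_p]} splits then $J[\fm_7] \cong \Z/7\Z \times \mu_7$ as $G_\Q$-modules, so over \emph{every} prime $\ell \equiv 1 \pmod 7$ of good reduction one gets $(\Z/7\Z)^2 \hookrightarrow J(\F_\ell)$; a single $\ell$ with $49 \nmid \#J(\F_\ell)$ then suffices. Verifying that such an $\ell$ exists — i.e. that the $7$-part of $J(\F_\ell)$ is exactly $\Z/7\Z$ for some split $\ell$ — is the one point requiring an explicit computation, but it is entirely routine given that $J$ is a fixed $5$-dimensional abelian variety whose reductions are already being handled by \texttt{Magma} elsewhere in the paper.
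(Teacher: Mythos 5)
Your proposal is correct and takes essentially the same route as the paper: the paper also assumes a splitting, concludes $(\Z/7\Z)^2 \subset J(\Q(\mu_7))_\tor$, takes the same auxiliary prime $\ell = 29$ (the smallest prime $\equiv 1 \pmod 7$, so that it splits completely in $\Q(\mu_7)$), applies Proposition~\ref{propKatz} to embed this into $J(\F_{29})$, and derives the contradiction from $\#J(\F_{29}) = 2^3\cdot 3^2\cdot 7\cdot 13\cdot 23^2$ not being divisible by $7^2$. The ``obstacle'' you worry about in step~(2) is a non-issue, as you yourself note at the end: only the forward implication (split $\Rightarrow$ constant over $\F_\ell$) is needed, so no equivalence is required.
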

\begin{proof} 
%If \eqref{eqJ[m_p]} splits then $\mu_7\subset J$. Now a theorem of Vatsal \cite{Vatsal} implies that $\mu_7\subset \Sigma$, which contradicts \eqref{eqSG}. In a more elementary fashion one can reach a contadiction as follows. 
If \eqref{eqJ[m_p]} splits then $\Z/7\Z\times \Z/7\Z\subset J(\Q(\mu_7))_\tor$. 
Since $\ell=29$ splits completely in $\Q(\mu_7)$, by Proposition \ref{propKatz} we 
must have $7^2\mid \# J(\F_\ell)=2^3\cdot 3^2\cdot 7\cdot 13\cdot 23^2$. 
\end{proof}

\begin{comment}
The Jacobian variety $J$ is isogenous to $A_1\times A_2\times A_3$, where $A_1, A_2, A_3$ 
are simple abelian varieties over $\Q$ of dimension 
$1, 2, 2$, respectively, corresponding to $\Q$, $\Q(\sqrt{2})$, $\Q(\sqrt{3})$ in $\T\otimes\Q$. 
The corresponding normalized eigenforms in $S_2(\G_0(65))$ are 
\begin{align*}
f_1&=q - q^2 - 2q^3 - q^4 - q^5+\cdots  \\ 
f_2 &=q + aq^2 + (-a + 1)q^3 + q^4 - q^5+\cdots\\
f_3&=q + bq^2 + (b + 1)q^3 + (-2b - 1)q^4 + q^5+\cdots
\end{align*}
where $a$ is a root of $x^2-3$ and $b$ is a root of $x^2+2x-1$. 
One can show, using \texttt{Magma}, that $A_1(\Q)\cong \Z\times \Z/2\Z$, $A_2(\Q)\cong \Z/6\Z$, 
$A_3(\Q)\cong \Z/14\Z$. 
Hence the rank of $J$ is $1$, i.e., $J(\Q)\cong \Z\times \cC$. 

The Atkin-Lehner involutions $W_5, W_{13}$ act 
on $f_1,f_2, f_3$ with signs $(1,1)$, $(-1,1)$, $(1,-1)$, respectively. 
Hence $\fm_3$ is supported on $A_2$ and $\fm_7$ is supported on $A_3$. 
(This also can be deduced from Remark \ref{rem3.2}.)
\end{comment}

\begin{rem}
Let $E$ be the elliptic curve defined by $y^2+xy=x^3-x$. It is easy to check that $E$ 
has a rational $2$-torsion point and $E[2]$ as a Galois module is a non-split extension 
$$
0\To \Z/2\Z\To E[2]\To \Z/2\Z\To 0.
$$ By Table 1 in \cite{Cremona}, $E$ is isomorphic 
to a subvariety of $J$. We claim that $E[2]\subset J[\fm_2]$. To see this, consider 
a Hecke operator $T_p=(a_p, b_p+\sqrt{2}c_p, d_p+\sqrt{3}e_p)$ for prime $p\nmid 65$, given as in \eqref{eqTrep}. 
$T_p$ acts on $E$ by multiplication by $a_p$. The fact that $\fm_2$ is Eisenstein implies 
that $a_p-(p+1)$ is even; thus, $T_p-(p+1)$ annihilates $E[2]$; thus $\fm_2=(2, \cE)$ annihilates $E[2]$.   
On the other hand, clearly $E[2]\not\subset \cC[2]$, as 
$\cC[2]$ is constant. Therefore, $\dim_{\T/\fm_2}J[\fm_2]\geq \dim_{\F_2}\cC[2]+1=3$. 
This gives a geometric proof of the fact that $\T_{\fm_2}$ is not Gorenstein. 
Note that Proposition \ref{propC=Tor} implies that $\Sigma[2]\subset \cC[2]$, since $\mu_2\cong \Z/2\Z$ 
is constant over $\Q$.
\end{rem}

\begin{prop}\label{propNoH}
Let $\fm\lhd \T$ be an Eisenstein maximal ideal of odd residue characteristic $p$. Let $H\subset J[\fm^s]$, $s\geq 1$, 
be a $\T[G_\Q]$-module. If $J[\fm]\not\subset H$, then $H\subsetneq J[\fm]$. 
\end{prop}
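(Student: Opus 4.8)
The plan is to work inside the $p$-torsion $J[p]$ (recall $J[\fm^s]\subset J[p]$ since $p$ is the residue characteristic of $\fm$), and to exploit the fact that $\T_\fm$ is Gorenstein for $\fm$ of odd residue characteristic (Proposition~\ref{propPrincipal}), so that $\dim_{\T/\fm}J[\fm]=2$. Suppose for contradiction that $H$ is a $\T[G_\Q]$-submodule of $J[\fm^s]$ with $J[\fm]\not\subset H$ and $H\not\subsetneq J[\fm]$; the second condition, combined with $J[\fm]\not\subset H$, forces $H\cap J[\fm]\subsetneq J[\fm]$ to be a \emph{proper} $\T/\fm$-subspace of the two-dimensional space $J[\fm]$, hence $H\cap J[\fm]$ is either $0$ or one-dimensional over $\T/\fm\cong \F_p$.

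First I would rule out $H\cap J[\fm]=0$. If $H$ is nonzero, then since $\fm$ is the only maximal ideal in its support and $H$ is a nonzero finite $\T$-module annihilated by a power of $\fm$, the submodule $H[\fm]=H\cap J[\fm]$ is nonzero by the usual Nakayama/filtration argument (a nonzero module over the local ring $\T_\fm$ has nonzero socle). So $H\cap J[\fm]$ is exactly one-dimensional over $\F_p$, i.e. a line $L\subset J[\fm]$. Now the key input is the structure of $J[\fm]$ as a $G_\Q$-module given in \eqref{eqJ[m_p]}: there is a non-split (for $p=7$; split for $p=3$) exact sequence $0\to \Z/p\Z\to J[\fm]\to \mu_p\to 0$, where the sub is the cuspidal line $\cC_p$ and the quotient is the Shimura/$\mu_p$ line. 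The $G_\Q$-stable lines in $J[\fm]$ are therefore: $\cC_p$ always, and (only when \eqref{eqJ[m_p]} splits, i.e. $p=3$) also $\Sigma_p\cong\mu_p$. So $L$ must be one of these.

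The remaining step — and the one I expect to be the real obstacle — is to show that no $\T[G_\Q]$-module $H\subset J[\fm^s]$ can have socle line $L$ equal to $\cC_p$ or $\Sigma_p$ while strictly containing $J[\fm]$-fails, i.e. that $H$ with $H[\fm]=L$ a single line cannot itself fail to be contained in $J[\fm]$. The point is a "multiplicity one"–type statement: if $H[\fm]$ is one-dimensional, then $H$, being a finite module over the Gorenstein local ring $\T_\fm$ with one-dimensional socle, must in fact be \emph{cyclic} over $\T_\fm$, hence isomorphic to $\T/\fm^r$ for some $r\le s$; and then $H$ is generated by a single element $x$ whose image generates $J[\fm]/\fm J[\fm]\cdots$ — here one needs that the natural map $H/\fm H \to J[\fm^s]/\fm J[\fm^s]$ lands in, and the Gorenstein duality forces, $H\subseteq J[\fm]$ unless $H[\fm]$ were two-dimensional. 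Concretely: by Gorenstein duality for $\T_\fm$, $J[\fm^s]$ is a faithful $\T/\fm^s$-module which is free of rank one when $\T_\fm$ is Gorenstein, so a cyclic submodule $H\cong \T/\fm^r$ sits inside as $\fm^{s-r}J[\fm^s]$, whose intersection with $J[\fm]=\fm^{s-1}J[\fm^s]$ is either all of $J[\fm]$ (if $r\ge$ the relevant bound) or... — the care needed here is precisely to show the cyclic case forces $r=1$, i.e. $H\subseteq J[\fm]$, which is where one must use that a $G_\Q$-stable $H$ cannot have $H[\fm]$ a proper line \emph{and} be larger than $J[\fm]$, because enlarging past $J[\fm]$ in the free rank-one module $J[\fm^s]\cong\T/\fm^s$ forces the socle to stay one-dimensional but forces $H$ to contain $J[\fm]$. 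I would therefore organize the argument as: (1) $H\ne 0 \Rightarrow H[\fm]\ne 0$; (2) $H[\fm]$ is then a $G_\Q$-line, hence $\cC_p$ or $\Sigma_p$; (3) using Gorenstein-ness, $J[\fm^s]$ is free of rank one over $\T/\fm^s$, so submodules are totally ordered by $\fm$-powers, and any $H$ with $J[\fm]\not\subset H$ satisfies $H\subset J[\fm^{s-1}]$ and inductively $H\subset J[\fm]$, giving $H\subsetneq J[\fm]$. The main obstacle is making step (3) clean — i.e., pinning down that $J[\fm^s]$ is a cyclic (free rank one) $\T/\fm^s$-module from the Gorenstein hypothesis and the known $J[\fm]$, and that $G_\Q$-stability is what excludes the only alternative (a non-cyclic $H$ with two-dimensional socle $= J[\fm]$, which is exactly the excluded case $J[\fm]\subset H$).
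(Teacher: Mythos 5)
Your reduction steps (1)--(2) are correct and match the paper: $H\ne 0$ forces $H[\fm]\ne 0$; since $\dim_{\T/\fm}J[\fm]=2$ and $H[\fm]\subsetneq J[\fm]$, the socle $H[\fm]$ is a single $\T/\fm$-line, and (after reducing to $H\subset J[\fm^2]$ as the paper does) one gets $H\cong \T/\fm^2$ because $\T_\fm$ is a DVR. The $G_\Q$-stable lines in $J[\fm]$ are indeed $\cC_p$ and, when \eqref{eqJ[m_p]} splits, $\Sigma_p\cong\mu_p$. So far, so good.

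The gap is in your step (3), and it is not merely a matter of ``making it clean.'' You assert that $J[\fm^s]$ is a \emph{free rank-one} $\T/\fm^s$-module, so that its $\T$-submodules are totally ordered by powers of $\fm$. This is false. Since $\T_\fm$ is Gorenstein, Mazur's theorem gives $\dim_{\T/\fm}J[\fm]=2$, and correspondingly the $\fm$-adic Tate module is free of rank \emph{two} over $\T_\fm$; hence $J[\fm^s]\cong(\T/\fm^s)^2$. In a free module of rank two over a local Artinian ring there is no total ordering of submodules: for example, in $(\T/\fm^2)^2$ the cyclic submodules generated by $(1,0)$ and by $(0,1)$ are both $\cong\T/\fm^2$ and neither is contained in $J[\fm]=\fm(\T/\fm^2)^2$. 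So there genuinely \emph{are} $\T$-submodules $H\cong\T/\fm^2$ of $J[\fm^2]$ with one-dimensional socle and $H\not\subset J[\fm]$. What must be ruled out is that any such $H$ is $G_\Q$-stable, and that is not a consequence of the module structure.

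This is where an arithmetic input is unavoidable, and it is exactly what the paper supplies. Given $H\cong\T/\fm^2$ stable under $G_\Q$, the commuting $\T$-action makes $G_\Q\to\Aut_\T(H)\cong(\T/\fm^2)^\times$ abelian, and $J$ has good reduction outside $5$ and $13$, so by class field theory the splitting field $K=\Q(H)$ lies inside an explicit cyclotomic field (their Lemma~\ref{lem3.6}). Choosing a prime $\ell$ that splits completely in $K$, one gets $H\hookrightarrow J(\F_\ell)$ by Proposition~\ref{propKatz}, forcing $p^2\mid\#J(\F_\ell)$ (resp.\ $p^3$ in the $p=3$ case where $\mu_p$ is also rational over $K$). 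The paper then computes $\#J(\F_{19})$ and $\#J(\F_{937})$ explicitly to obtain a contradiction. Your algebraic argument cannot replace this: if it were valid it would show a purely module-theoretic statement that is simply not true.
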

\begin{proof}
We will assume that 
$J[\fm]\not \subset H$ and $H\not\subset J[\fm]$, and reach a contradiction. First, we make some simplifications. 
Since $H[\fm^2]\subset J[\fm^2]$ is a $\T[G_\Q]$-module satisfying the same assumptions, if we want to 
show that $H$ does not exist, it is enough to prove the non-existence under the additional assumption that $H\subset J[\fm^2]$. 

\begin{lem}
We have $H\cong \T/\fm^2$. 
\end{lem}
\begin{proof} We can consider $H$ as a finite $\T_\fm$-module. Since $\T_\fm$ 
is a DVR, we have 
$$
H\cong \T_\fm/\fm^{s_1}\times \cdots\times \T_\fm/\fm^{s_r}\cong \T/\fm^{s_1}\times \cdots\times \T/\fm^{s_r}
$$
for some $1\leq s_1\leq s_2\leq \cdots \leq s_r\leq 2$. 
Since $\dim_{\T/\fm}J[\fm]=2$, and $H[\fm]\cong (\T/\fm)^r\subsetneq J[\fm]$, 
we must have $r= 1$, i.e.,  
$H\cong \T/\fm^{s}$ for $s=1$ or $s=2$. 
If $s=1$, then $H\subset J[\fm]$, contrary to our assumption, so $s=2$.  
\end{proof}

Note that 
$$
\T/\fm^2\cong 
\begin{cases}
\Z/p^2\Z & \text{if }p=7;\\
\F_p[x]/(x^2) & \text{if }p=3. 
\end{cases}
$$
Let $K:=\Q(H)$. If $K=\Q$, then $p^2= \# H$ divides $\# J(\Q)_\tor$. This 
contradicts Proposition \ref{propC=Tor}, so we will assume 
from now on that $K\neq \Q$. Let $\eta$ be a generator of $\fm$. 
Note that $\eta H=H[\eta]\subset J[\fm]$ is a proper non-trivial 
Galois invariant subgroup. On the other hand, the $G_\Q$-invariant subgroups of $J[\fm]$ 
are $\Z/p\Z$ and $\mu_p$, so either 
\begin{equation}\label{eqHseq}
0\to \Z/p\Z\to H\xrightarrow{\eta}\Z/p\Z\to 0, 
\end{equation}
or 
\begin{equation}\label{eqHseq2}
0\to \mu_p\to H\xrightarrow{\eta}\mu_p\to 0. 
\end{equation}
Moreover, the second possibility does not occur for $p=7$, since \eqref{eqJ[m_p]} does not split.

\begin{lem}\label{lem3.6} Let $K_p$ denote the unique degree $p$ extension of $\Q$ contained in $\Q(\mu_{p^2})$. 
\begin{enumerate}
\item If $p=7$, then $K=K_p$. 
\item Assume $p=3$. In case of \eqref{eqHseq}, we have $[K:\Q]=p$ and $K\subset K_p\Q(\mu_{13})$. In case of 
\eqref{eqHseq2}, we have $\Q(\mu_p)\subseteq K\subset \Q(\mu_{p^2}, \mu_{13})$.  
\end{enumerate}
\end{lem}
\begin{proof} Since the actions of $\T$ and $G_\Q$ on $H$ commute, we have  
$$\Gal(K/\Q)\subset \Aut_\T(\T/\fm^2)\cong (\T/\fm^2)^\times\cong \Z/(p-1)p\Z.$$ 
Hence $K/\Q$ is an abelian extension. 
Since $J$ has good reduction away from $5$ and $13$, the 
extension $K/\Q$ is unramified away from $p, 5, 13$. By class field theory, 
$K$ is a subfield of a cyclotomic extension $\Q(\mu_{p^{n_1}}, \mu_{5^{n_2}}, \mu_{13^{n_3}})$, for some $n_1, n_2, n_3\geq 1$. 
We have 
\begin{align*}
&\Gal(\Q(\mu_{p^{n_1}}, \mu_{5^{n_2}}, \mu_{13^{n_3}})/\Q)\\  &\cong \Gal(\Q(\mu_{p^{n_1}}/\Q)\times 
\Gal(\Q(\mu_{5^{n_2}}/\Q)\times \Gal(\Q(\mu_{13^{n_3}}/\Q) \\ 
&\cong \Z/p^{n_1-1}(p-1)\Z \times \Z/5^{n_2-1}(5-1)\Z \times \Z/13^{n_3-1}(13-1)\Z. 
\end{align*}

Assume $p=7$. Since in this case $H$ is as in \eqref{eqHseq}, 
$G_\Q$ acts trivially on $pH$, so $\Gal(K/\Q)$ is in the subgroup of units $(\Z/p^2\Z)^\times$
which satisfy $ap\equiv p\ (\mod\ p^2)$, or equivalently, $a\equiv 1 \ (\mod\ p)$. The units with this property 
form the cyclic subgroup of order $p$ in $(\Z/p^2\Z)^\times$. Hence $K/\Q$ is an abelian extension of degree $p$. 
Since $p$ does not divide $(5-1)5^{n_2-1}$ or $(13-1)13^{n_3-1}$, the field $K$ is fixed by $\Gal(\Q(\mu_{5^{n_2}})/\Q)\times \Gal(\Q(\mu_{13^{n_3}})/\Q)$. 
Therefore, $K\subset \Q(\mu_{p^{n_1}})$ is a subfield of degree $p$ over $\Q$. There is a unique such 
field (as $\Gal(\Q(\mu_{p^{n_1}}/\Q)$ is cyclic), and it is contained in $\Q(\mu_{p^2})$. 

Assume $p=3$ and $H$ fits into an exact sequence \eqref{eqHseq}. 
By the argument in the previous paragraph, $[K:\Q]=p$. 
Let $F:=\Q(\mu_{13})$ and $K'=F(H)$. We know that $[K':F]=1$ or $p$. 
Note that $$\Gal(\Q(\mu_{p^{n_1}}, \mu_{5^{n_2}},\mu_{13^{n_3}})/F)\cong \Z/(p-1)p^{n_1-1}\times \Z(5-1)5^{n_2-1}\times \Z/13^{n_3-1}\Z, $$   
so as in the case of $p=7$, we get $F(H)\subset K_pF$. 
 
 Finally, assume $p=3$ and $H$ fits into an exact sequence \eqref{eqHseq2}. Then obviously $\Q(\mu_p)\subset K$. 
 Over $L:=\Q(\mu_p)$, the group scheme $H$ fits into an exact sequence \eqref{eqHseq}, so, as in the earlier cases, $L(H)/L$  
 is cyclic of order $1$ or $p$. If $H$ is not constant over $FL$, then $[FL(H):FL]=p$. 
On the other hand,  
$$\Gal(\Q(\mu_{p^{n_1}}, \mu_{5^{n_2}},\mu_{13^{n_3}})/FL)\cong \Z/p^{n_1-1}\times \Z(5-1)5^{n_2-1}\times \Z/13^{n_3-1}\Z.$$ 
As in the earlier cases, this implies that $FL(H)\subset K_pFL=\Q(\mu_{p^2}, \mu_{13})$. 
Overall, we see that $K$ is always a subfield of $\Q(\mu_{p^2}, \mu_{13})$. 
\end{proof}

Assume $p=7$. By Lemma \ref{lem3.6}, we have $K=K_p$. 
Let $\ell$ be a prime which splits completely in $K_p$. Then $H$ 
is constant over $\Q_\ell$, so $H\subset J(\Q_\ell)_\tor$. On the other hand, under the canonical reduction map, we have an injection 
$J(\Q_\ell)_\tor\hookrightarrow J(\F_\ell)$; see Proposition \ref{propKatz}. Therefore, we must have $p^2\mid \# J(\F_\ell)$. 
It is easy to show that a prime $\ell$ splits completely in $K_p$ if and only if its order in $(\Z/p^2\Z)^\times$ is coprime to $p$. 
We can take $3$ as a generator of $(\Z/p^2\Z)^\times$. The elements of orders coprime to $p$ 
are the powers of $3^7\equiv 31$. These are $\{31, 30, 48, 18, 19, 1\}$. Thus, the smallest prime 
that splits completely in $K_7$ is $19$, and $\# J(\F_{19})=2^3\cdot 3^2\cdot 7\cdot 13\cdot 23^2$. 
As $7^2$ does not divide this number, we get a contradiction.  

Assume $p=3$. By Lemma \ref{lem3.6}, we have $\Q(H)\subset \Q(\mu_{13}, \mu_{p^2})$. 
Since $\mu_p$ is constant over $K'$, 
we have $\Z/p\Z\times \Z/p\Z\cong J(K')[\fm]\subset J(K')_\tor\subset J(\Q_\ell)$. 
Since $H$ is also constant over $K'$, we also have $\Z/p\Z\times \Z/p\Z\cong H\subset J(\Q_\ell)$. 
Since $J[\fm]\not\subset H$, we see that $J(\Q_\ell)$ contains a subgroup isomorphic to $(\Z/p\Z)^3$. 
As earlier, this implies that $p^3\mid \#J(\F_{\ell})$.
A prime $\ell$ splits completely in $K':=\Q(\mu_{13}, \mu_{p^2})$ if and only if 
$\ell\equiv 1\ (\mod\ 9)$ and $\ell\equiv 1\ (\mod\ 13)$. The smallest such prime is 
$\ell=937$, and  
$\# J(\F_{937})= 2^{13}\cdot 3^2\cdot 7\cdot 11^2\cdot 41\cdot 97\cdot 2963$. 
As $3^3$ does not divide this number, we get a contradiction. 
This concludes the proof of Proposition \ref{propNoH}.
\end{proof}
\begin{comment}
In both cases $p=3$ and $7$, instead of computing $\# J(\F_\ell)$, it is enough to consider the 
subvariety of $J$ that supports $\fm_p$. For $p=7$, we can get a contradiction from $\# A_3(\F_{19})=2\cdot 7\cdot 23$;
 for $p=3$, we can get a contradiction from $\# A_2(\F_{937})=2^5\cdot 3^2\cdot 2963$. 
\end{comment}

Let $A$ be an abelian variety over $\Q$ and $\pi: J\to A$ an isogeny defined over $\Q$. 
Assume $\ker(\pi)$ is invariant under the action of $\T$, i.e., $\ker(\pi)$ is a finite $\T[G_\Q]$-module. 
We can decompose $\ker(\pi)=\ker(\pi)_2\times \ker(\pi)_\odd$; each of these subgroups 
is also a $\T[G_\Q]$-module.  Let the maximal ideal $\fm\lhd \T$ be in the support of 
$H:=\ker(\pi)_\odd$. Since $\fm$ has odd residue characteristic, $\fm=\eta\T$ 
is principal by Proposition \ref{propPrincipal}. If $\ker(\eta)=J[\fm]\subset H$, then we can decompose 
$\pi=\pi'\circ \eta$, where $\pi': J\to A$ is another isogeny whose kernel is a $\T[G_\Q]$-module but 
with smaller odd component than $\pi$. We can apply the same argument to $\pi'$ and continue this process 
until we obtain an isogeny whose kernel does not contain any $J[\fm]$ with $\fm$ having odd residue 
characteristic. From now on we assume that $\pi$ itself has this property.  

Since $\fm$ has odd residue characteristic, 
the $\T[G_\Q]$-module $J[\fm]$ is $2$-dimensional over $\T/\fm$.  
By \cite[Prop. 14.2]{Mazur} and \cite[Thm. 5.2]{RibetLL}, if $\fm$ is not Eisenstein, then $J[\fm]$ 
is irreducible. Since $J[\fm]\cap H\neq 0$, we must have $J[\fm]\subset H$, which contradicts 
our assumption on $\pi$. Hence $H$ is supported on the Eisenstein maximal ideals $\fm_3$ and $\fm_7$. 
We decompose $H=H_3\times H_7$ into $3$-primary and $7$-primary components, which 
themselves are $\T[G_\Q]$-modules. Now $H_p\subset J[\fm_p^s]$ for some $s\geq 1$, $p=3,7$, 
and $J[\fm_p]\not\subset H_p$. Applying Proposition \ref{propNoH}, we conclude that $H_p\subsetneq J[\fm_p]$. 
Thus $H_7=0$ or $\cC_7$, and $H_3=0$ or $\Sigma_3$ or $\cC_3$. Overall, $H$ 
can be one of the following subgroups of $J$:
\begin{equation}\label{eqHlist}
0, \quad \cC_3, \quad \Sigma_3, \quad \cC_7, \quad  \cC_3\times \cC_7, \quad \Sigma_3\times \cC_7. 
\end{equation}

\begin{thm}\label{thmMain} If $A=J'$, then for $\pi: J\to J'$ chosen with the minimality condition discussed above, we must have 
$H=\cC_7$. 
\end{thm}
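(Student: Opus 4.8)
The plan is to eliminate the five nonzero possibilities in the list \eqref{eqHlist} one at a time, using the semi-abelian reduction of $J$ and $J'$ at $p=5$ and $q=13$ together with the monodromy-pairing machinery of Section 2. The key tool is that $\pi:J\to J'$ is a Ribet isogeny, so $J$ and $J'$ become isomorphic after passing to $\ell$-adic Tate modules; in particular their character groups and component groups at $5$ and at $13$ have the same orders. Concretely, I would invoke Lemma \ref{lemJNT2011}: since $J$ has purely toric reduction at both $5$ and $13$ (this is the classical fact, used by Ogg, that $J_0(5q)$ has toric reduction at $q$ for the relevant $q$, and symmetrically), each prime gives an exact sequence $0\to H_1\to \Phi_{J}\to \Phi_{J'}\to H_0\to 0$, where $H_0,H_1$ are the cokernel and the image of $\ker(\pi)$ under the reduction-to-components map \eqref{eqCanRedPhi}. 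Because $\#\Phi_J=\#\Phi_{J'}$ at each prime (equality of orders of component groups, which Ogg computed), we get $\#H_0=\#H_1$ at $p=5$ and at $q=13$ separately.

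The heart of the argument is then to compute, for each candidate $H$ in \eqref{eqHlist}, the images $\wp_5(H)\subset\Phi(5)=\Phi_J$ at $5$ and $\wp_{13}(H)\subset\Phi_J$ at $13$, using the explicit description of $\cC$, $\Sigma$, and the maps $\wp_p,\wp_q$ from Section 4. First I would record: $\cC_7\subset\cC$ maps isomorphically onto its image in $\Phi(5)\cong\Z/42\Z$ (the $7$-part of $\wp_5$), since $\wp_5(c_p)$ has order $14$, while $\cC_7$ lies in $\ker(\wp_{13})$ because $\wp_{13}(c_p)=0$; so for $H=\cC_7$ the sequence at $5$ reads $0\to\Z/7\Z\to\Z/42\Z\to\Phi_{J'}\to 0$ with $H_0=0$, consistent with $\#\Phi_{J'}=6=\#\Phi_J/7$ — and this is exactly the component-group data Ogg records for $J^{65}$. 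By contrast, for $H=\Sigma_3$ the Shimura subgroup reduces to $0$ in every component group (the Shimura subgroup lies in the connected component, by \cite{LO} or the fact that $\Sigma$ is $\mu$-type and toric reduction kills it — more precisely $\Sigma$ maps to $0$ in $\Phi_J$ at primes of multiplicative reduction), so both $H_0$ and $H_1$ vanish at $5$ and at $13$, forcing $\Phi_{J'}\cong\Phi_J$ at both primes; but then comparing with Ogg's computation of $\Phi_{J^{65}}$ at $5$ (which is \emph{not} isomorphic to $\Z/42\Z$) gives a contradiction. The same component-group comparison rules out $H=\cC_3$, $H=\cC_3\times\cC_7$, and $H=\Sigma_3\times\cC_7$: in each case one checks that the resulting $(\Phi_{J'}$ at $5$, $\Phi_{J'}$ at $13)$ pair does not match the known orders/structures of the component groups of $J^{65}$ at $5$ and $13$.

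I expect the main obstacle to be pinning down precisely what "matches" means — i.e., extracting from the Ribet isomorphism of Tate modules not just equality of the \emph{orders} $\#\Phi_J=\#\Phi_{J'}$ at each prime (which follows from Lemma \ref{lemPhil} applied to an auxiliary isogeny, or directly from $\ell$-adic Tate module isomorphism for $\ell\neq p,q$ together with a separate argument at the residue characteristic) but enough structural information to distinguish the candidates. The cleanest route is: (1) for $\ell$ odd and $\ell\nmid$ (relevant quantities), Lemma \ref{lemPhil} already gives $(\Phi_J)_\ell\cong(\Phi_{J'})_\ell$ once we know $\ell\nmid\#\ker(\pi)$ — so the candidates differing only in their $2$-part are invisible here and one must instead use the $3$- and $7$-parts; (2) at $\ell=3$ and $\ell=7$ one uses Lemma \ref{lemJNT2011} with the explicit $\wp_5,\wp_{13}$ to see that $H=\cC_7$ is the unique choice making the $3$-parts of $\Phi_{J'}$ at $5$ and $13$ agree with Ogg's values while also being consistent at the $7$-part. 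A subtlety worth flagging: one must check the hypotheses of Lemma \ref{lemJNT2011} (that $H$ becomes constant over an unramified extension, and the ramification condition at $p=5,13$ — here $\#H$ is coprime to the residue characteristics $5,13$ since $\#H\mid 2^?\cdot3\cdot7$, so this is automatic), and one must be slightly careful that $\ker(\pi)$ might have a nonzero $2$-part that interacts with the $5$- and $13$-adic component groups — but since we only compare odd parts, and $H_{\odd}$ is what appears, the $2$-part of $\ker(\pi)$ only affects $(\Phi)_2$ and can be ignored in this comparison. Assembling these, $H=\cC_7$ is forced.
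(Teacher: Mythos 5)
Your overall strategy — compute, for each candidate $H$ in \eqref{eqHlist}, what the odd parts of the component groups at $5$ and $13$ would become, and compare with Ogg's values $\Phi(5)'\cong\Z/6\Z$, $\Phi(13)'\cong\Z/42\Z$ — is exactly the paper's approach. But there are two concrete gaps in the way you set it up.

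First, the statement in your opening paragraph that ``$\#\Phi_J=\#\Phi_{J'}$ at each prime'' is false: at $5$ one has $\#\Phi_J=42$ but $\#\Phi_{J'}=6$, and at $13$ one has $\#\Phi_J=6$ but $\#\Phi_{J'}=42$. Only the product over both primes is preserved, not the individual orders. The deduction ``$\#H_0=\#H_1$ at $5$ and at $13$ separately'' is therefore unjustified, and indeed your own worked example for $H=\cC_7$ later contradicts it (you correctly use $\#\Phi_{J'}=6$ at $5$). So the first paragraph should simply be discarded; the comparison you actually need is the asymmetric one from your second paragraph.

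Second, and more seriously, you apply Lemma~\ref{lemJNT2011} directly to $\pi:J\to J'$. That lemma requires the \emph{entire} kernel $\ker(\pi)$ to become a constant group scheme over an unramified extension of $\Q_5$ (resp.\ $\Q_{13}$). You verify the coprimality/ramification-index hypothesis, but that is a separate assumption; the constancy hypothesis is the real issue, and you cannot check it because $\ker(\pi)_2$ is unknown. For an abelian variety with purely toric reduction at $p$, the $2^n$-torsion points typically generate a \emph{ramified} extension of $\Q_p$ (think of $q^{1/2^n}$ for a Tate curve), so there is no a priori reason for $\ker(\pi)_2$ to satisfy the hypothesis. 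Your closing remark that ``the $2$-part of $\ker(\pi)$ only affects $(\Phi)_2$'' is the right intuition but does not repair the logic, because without the constancy hypothesis you cannot invoke Lemma~\ref{lemJNT2011} at all, let alone take odd parts of its output. The paper circumvents this by factoring $\pi$ as $J\to J/H\xrightarrow{\pi'} J'$ where $\ker(\pi')\cong\ker(\pi)_2$: it applies Lemma~\ref{lemJNT2011} only to $J\to J/H$ (whose kernel $H$ is explicitly one of the groups in \eqref{eqHlist}, all constant over $\Q$ or $\Q(\mu_3)$, hence over an unramified extension of $\Q_5$ and $\Q_{13}$), and then applies Lemma~\ref{lemPhil} to $\pi'$ to get $(\Phi_{J/H})_\ell\cong(\Phi_{J'})_\ell$ for $\ell=3,7$. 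That decomposition is what makes the argument airtight, and you should adopt it. A minor further point: you assert that $\Sigma_3$ reduces to $0$ in the component groups; the paper deliberately avoids relying on this, instead checking that \emph{neither} possible value of $\wp_p(\Sigma_3)$ (trivial or injective) gives the right $\Phi(p)''$ when $\Sigma_3\subset H$ — a safer move, since you offer no proof of your assertion.
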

\begin{proof}
The reductions of $J$ and $J'$ at $p=5$ or $13$ are purely toric, 
%i.e., the connected component of the identity is a torus: $\cJ_{\overline{\F}_p}^0\cong {\cJ'}_{\overline{\F}_p}^0\cong\gm{\overline{\F}_p}^5$; 
cf. \cite{Ogg}, \cite{RibetLL}.  
Let $\Phi(5)'$ and $\Phi(13)'$ be the component groups of $J'$ at $5$ and $13$. We have 
(see \cite[p. 214]{Ogg}):
$$
\Phi(5)'\cong \Z/6\Z, \qquad \Phi(13)'\cong \Z/42\Z. 
$$

We decompose $\pi: J\to J'$ as $J\to J/H\overset{\pi'}{\To} J'$, 
where $\ker(\pi')$ is isomorphic to the $2$-primary part of $\ker(\pi)$. 
Let $\Phi(p)''$ be the component group of $J/H$ at $p$. 
By Lemma \ref{lemPhil} we must have $(\Phi(p)'')_\odd\cong (\Phi(p)')_\odd$. On the other hand, since we know 
the image and kernel of $\wp_p:\cC\to \Phi(p)$, we 
can compute $\# (\Phi(p)'')_\odd$ for each possible $H$ from the list \eqref{eqHlist} using Lemma \ref{lemJNT2011}. 
This simple calculation shows that the only possible $H$ is $\cC_7$. 
(Note that the group scheme $\Sigma_3$ becomes constant over an unramified extension of $\Q_p$, but  
it is not important to know whether 
$\wp_p: \Sigma_3\to \Phi(p)$ is injective or trivial; neither of these possibilities gives the correct $\Phi(p)''$ 
if $\Sigma_3\subset H$.)
\end{proof}

\begin{rem}\label{rem4.9}
Let $N=5\cdot 7$. In this case, 
\begin{align*}
\T=\Z[T_3]& \cong \Z[x]/(x-1)(x^2+x-4)\\  
&\cong \{(a, b+c\alpha)\in \Z\times \Z[\alpha]\ \big|\ a, b, c\in \Z,\ a\equiv b+c\ (\mod\ 2)\},
\end{align*}
where $\alpha:=-\frac{1+\sqrt{17}}{2}$. Note that $\Z[\alpha]$ 
is the ring of integers in $\Q(\sqrt{17})$, and $\Z[\alpha]$ is a Euclidean domain with respect to 
the usual norm. We have 
$$
\cC\cong \Z/2\Z\times\Z/8\Z\times \Z/3\Z,\qquad \Sigma\cong \mu_4\times \mu_3. 
$$
There is a unique Eisenstein maximal ideal $\fm_3\lhd \T$ of odd residue characteristic. 
There is a unique $\Q$-isogeny class of elliptic curves 
of level $35$. The optimal curve is \cite[p. 112]{Cremona}
$$
E: y^2 +  y = x^3 +x^2 + 9x +1. 
$$
We have $E[3]\cong \mu_3\times \Z/3\Z$.  
Since $\T_\fm$ is Gorenstein for any maximal ideal $\fm\lhd \T$ (as $\T$ is monogenic), $J[\fm]$ 
is two dimensional over $\T/\fm$, so $J[\fm_3]=E[3]= \cC_3\times \Sigma_3$. Now 
it is easy to analyze all $\T[G_\Q]$-submodules of $J$ supported on $\fm_3$. An argument similar 
to the argument of the proof of Theorem \ref{thmMain} then implies that there is a Ribet isogeny $\pi: J\to J'$ 
with $\ker(\pi)_\odd=0$. Ogg's conjecture in this case predicts that $\ker(\pi)\cong \Z/2\Z\subset \cC_2$. 
\end{rem}

\begin{rem}\label{rem4.10}
Let $N=3\cdot 13$. In this case, 
\begin{align*}
\T=\Z[T_2]& \cong \Z[x]/(x-1)(x^2+2x-1)\\  
&\cong \{(a, b+c\sqrt{2})\in \Z\times \Z[\sqrt{2}]\ \big|\ a, b, c\in \Z,\ a\equiv b\ (\mod\ 2)\},
\end{align*}
We have 
$$
\cC\cong \Z/2\Z\times\Z/4\Z\times \Z/7\Z,\qquad \Sigma\cong \mu_4. 
$$
There is a unique Eisenstein maximal ideal $\fm_7\lhd \T$ of odd residue characteristic. 
$J[\fm]$ fits into the exact sequence \eqref{eqJ[m_p]}, which is non-split in this case.  
One can classify $\T[G_\Q]$-submodules of $J$ supported on $\fm_7$ using an argument similar to 
the argument we used in Proposition \ref{propNoH}. Finally, one deduces as in 
Theorem \ref{thmMain} that there is a Ribet isogeny $\pi: J\to J'$ 
with $\ker(\pi)_\odd=\cC_7\cong \Z/7\Z$. Ogg's conjecture in this case predicts that $\ker(\pi)=\cC_7$. 
\end{rem}

\subsection*{Acknowledgements} This work was carried out in part while the second  
 author was visiting the Taida Institute for Mathematical Sciences in Taipei and 
 the Max Planck Institute for Mathematics in Bonn in 2016. 
 He thanks these institutes for their hospitality, excellent working conditions, and financial support. 
 He is also grateful to Fu-Tsun Wei for very useful discussions related to the topic of this paper.

%---------------------------------------------------------------------

%\bibliographystyle{amsplain}
%\bibliography{N65July.bib}

% ------------------------------------------------------------------------

\end{document}